\newtheorem*{theo}{Theorem}
\newtheorem{theorem}{Theorem}
\newtheorem{lemma}{Lemma}
\newtheorem{proposition}{Proposition}
\theoremstyle{definition}
\newtheorem*{example}{Example}
\newtheorem*{remark}{Remark}
\newtheorem*{nota}{Notation}
\numberwithin{theorem}{section}
\numberwithin{proposition}{section} \numberwithin{lemma}{section}
\numberwithin{corollary}{section} 
\title[Singular components of Springer fibers]{Singular components of Springer fibers \\
in the two-column case}
\author{Lucas Fresse}
\address{Department of Mathematics, the Weizmann Institute of Science, Rehovot 76100, Israel}
\email{lucas.fresse@weizmann.ac.il}
\keywords{flag varieties, Springer fibers, singularity criteria, Young tableaux}
\subjclass{14M15, 14B05}
\begin{document}
\begin{abstract}
We consider the Springer fiber ${\mathcal B}_u$
corresponding to a nilpotent endomorphism $u$ of nilpotent order $2$.
As a first result, we give a description of the elements
of a given component of ${\mathcal B}_u$ which are fixed by the action
of the standard torus relative to some Jordan basis of $u$.
By using this result,
we establish a necessary and sufficient condition of singularity
for the components of ${\mathcal B}_u$.
\end{abstract}

\maketitle

Let $V$ be a $\mathbb{C}$-vector space of dimension $n\geq 0$
and let $u:V\rightarrow V$ be a nilpotent endomorphism.
We denote by ${\mathcal B}_u$
the set of $u$-stable complete flags, i.e.
flags $(V_0\subset...\subset V_n=V)$
such that $u(V_i)\subset V_i$
for any $i$.
The set ${\mathcal B}_u$ is a projective subvariety of the variety
of complete flags.
The variety ${\mathcal B}_u$ is called {\em Springer fiber}
since it can be seen as the fiber over $u$ of the Springer resolution
of singularities of the cone of nilpotent endomorphisms of $V$
(see for example \cite{Slodowy}).
The Springer fiber ${\mathcal B}_u$ is not irreducible in general
and the geometry of its irreducible components has been an important topic of study
for more than thirty years.
Many problems remain unsolved, among them the problem to determine the singular
components of ${\mathcal B}_u$.

\medskip

The geometry of ${\mathcal B}_u$ depends on the Jordan form of $u$, which
can be represented by a Young diagram:
let $m_1\geq...\geq m_r$ be the sizes of the Jordan blocks of $u$, and
denote by $Y=Y(u)$ the Young diagram of rows of lengths $m_1,...,m_r$.
The problem to determine singular components
has a complete answer in two cases:
when the diagram $Y$ is of hook type or has two rows, 
every component of ${\mathcal B}_u$ is nonsingular (see \cite{Fung} and \cite{Vargas}).
When $Y$ has two columns, singular components can arise (see \cite{Spaltenstein}
or \cite{Vargas}).
In this article we give
a necessary and sufficient condition of singularity for the components
of ${\mathcal B}_u$
in the two-column case.

\medskip

In section 1, the diagram $Y$ is general.
Following \cite{Spaltenstein}, we recall the classical parameterization
of the components of ${\mathcal B}_u$ by the standard tableaux of shape $Y$.
Let ${\mathcal K}^T\subset {\mathcal B}_u$ be the component corresponding to the standard tableau $T$.
We fix a Jordan basis of $u$ and we denote by $H\subset GL(V)$ the torus
of diagonal automorphisms in this basis.
We show that the elements of ${\mathcal B}_u$ which are fixed by $H$ are parameterized by the
so-called row-standard tableaux, i.e. row-increasing numberings of $Y$
by $1,...,n$. Set $F_{T'}\in{\mathcal B}_u$ to be the flag corresponding to
the row-standard tableau $T'$.

\medskip

From section 2, we suppose that the diagram $Y$ has two columns of lengths $r\geq s$.
Let $\overline{T}$ be the tableau numbered from top to bottom by $1,...,r$
in the first column and by $r+1,...,n$ in the second column.
We show that the flag
$F_{\overline{T}}$ belongs to every component of ${\mathcal B}_u$, and that a
component ${\mathcal K}^T\subset{\mathcal B}_u$ is nonsingular if and only if $F_{\overline{T}}$ is
a nonsingular point of ${\mathcal K}^T$.
In section 2.2, we give some relations satisfied by the elements of ${\mathcal K}^T$.
For $T$ and $T'$ standard and row-standard tableaux of shape $Y$,
in section 2.3, we establish a necessary and sufficient condition for
$F_{T'}$ to be in ${\mathcal K}^T$.
This question can be related to the description of orbital varieties
in term of $B$-orbits given by A. Melnikov (see \cite{Melnikov}),
and the result that we prove here can be connected to \cite[Theorem 3.15]{Melnikov-Pagnon}. 

\medskip

Let ${\mathcal X}(Y)$ be the set of row-standard tableaux
which are obtained from $\overline{T}$ by switching two entries $i<j$, with $i\leq r$.
Let $\#A$ denote the number of elements in a set $A$.
Our main result, proved in section 3, is the following

\begin{theo}
Suppose that $Y=Y(u)$ has two columns.
Let $r$ be the length of the first column of $Y$.
The component ${\mathcal K}^T\subset {\mathcal B}_u$ is singular
if and only if
\[\#\{T'\in{\mathcal X}(Y):F_{T'}\in {\mathcal K}^T\}> r(r-1)/2.\]
\end{theo}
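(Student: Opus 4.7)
By the criterion recalled in Section~2.1, the component $\mathcal{K}^T$ is singular if and only if $F_{\bar T}$ is a singular point, equivalently $\dim T_{F_{\bar T}}\mathcal{K}^T > \dim\mathcal{K}^T$. Since every component of $\mathcal{B}_u$ has dimension $\binom{r}{2}+\binom{s}{2}$ in the two-column case, the theorem reduces to establishing the identity
\[
\dim T_{F_{\bar T}}\mathcal{K}^T \;=\; \binom{s}{2}+\#\{T'\in\mathcal{X}(Y):F_{T'}\in\mathcal{K}^T\},
\]
from which the criterion $\#\{T'\in\mathcal{X}(Y):F_{T'}\in\mathcal{K}^T\}>\binom{r}{2}$ follows by subtracting $\binom{s}{2}$.

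The plan is to compute this tangent space via the subtorus $H_u\subset H$ centralizing $u$: $H_u$ fixes $F_{\bar T}$, stabilizes $\mathcal{K}^T$, and therefore acts on $T_{F_{\bar T}}\mathcal{K}^T$. Inside the ambient tangent space $T_{F_{\bar T}}\mathcal{B}$, the full torus $H$ decomposes the space into one-dimensional weight subspaces indexed by pairs $(i,j)$ with $i<j$; the weight vector for $(i,j)$ is the tangent at $F_{\bar T}$ of an $H$-invariant curve $\gamma_{ij}$ joining $F_{\bar T}$ to $F_{\sigma_{ij}\bar T}$ (where $\sigma_{ij}$ is the transposition of $i$ and $j$). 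Such a direction lies in $T_{F_{\bar T}}\mathcal{K}^T$ precisely when $\gamma_{ij}\subset\mathcal{K}^T$. The pairs then split into two classes. In \emph{Class~(a)}, where $i,j>r$ (both entries belong to the second column of $\bar T$), there are $\binom{s}{2}$ pairs; using the relations derived in Section~2.2, one verifies that each corresponding curve $\gamma_{ij}$ lies in $\mathcal{K}^T$ independently of $T$, yielding $\binom{s}{2}$ tangent directions always present at $F_{\bar T}$. In \emph{Class~(b)}, where $i\leq r$, one has $\gamma_{ij}\subset\mathcal{B}_u$ exactly when $T':=\sigma_{ij}\bar T$ is row-standard, i.e.\ $T'\in\mathcal{X}(Y)$, and in that case the combinatorial criterion of Section~2.3 identifies the condition $\gamma_{ij}\subset\mathcal{K}^T$ with $F_{T'}\in\mathcal{K}^T$. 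Thus Class~(b) contributes exactly $\#\{T'\in\mathcal{X}(Y):F_{T'}\in\mathcal{K}^T\}$ directions.

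The principal obstacle is \emph{completeness and independence}: one must verify that the vectors produced in (a) and (b) are linearly independent and span $T_{F_{\bar T}}\mathcal{K}^T$. Independence is delicate, since a Class-(a) pair $(r+k,r+l)$ and the Class-(b) pair $(k,l)$ with $k,l\leq s$ share the same $H_u$-weight (the characters $\chi_{r+l}-\chi_{r+k}$ and $\chi_l-\chi_k$ coincide on $H_u$), so they sit in a common $H_u$-weight subspace of $T_{F_{\bar T}}\mathcal{B}$; they nevertheless define distinct $H$-weight directions inside $T_{F_{\bar T}}\mathcal{B}$ and hence remain linearly independent. Spanning is the harder step: one works in an explicit affine chart of $\mathcal{B}$ around $F_{\bar T}$, uses the relations of Section~2.2 to write equations defining $\mathcal{K}^T$, and verifies that their linearization at $F_{\bar T}$ cuts out exactly the subspace spanned by the vectors produced in (a) and (b). Granting these two points, the required dimension formula follows and the theorem is proved.
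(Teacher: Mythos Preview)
Your overall strategy matches the paper's: reduce to the tangent space $\mathcal D=T_{F_{\overline T}}\mathcal K^T$ and prove $\dim\mathcal D=\binom{s}{2}+\#\{T'\in\mathcal X(Y):F_{T'}\in\mathcal K^T\}$. Your Class~(b) is correct and coincides with Lemma~\ref{lemme_D_Dorth}(c). But your Class~(a) contains a genuine error, not merely an unfinished step.

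For $i,j>r$ the $H$-invariant curve $\gamma_{ij}$ (the line $t\,\varepsilon_{i,j}$ in the chart $\Omega$) does \emph{not} always lie in $\mathcal K^T$, and its tangent $\varepsilon_{i,j}$ is not always in $\mathcal D$. Set $k=i-r$ and $l=j-r\le s$, and let $T'\in\mathcal X(Y)$ be obtained from $\overline T$ by swapping $k$ and $l$. When $F_{T'}\notin\mathcal K^T$, the paper exhibits (Lemma~\ref{lemme_D_Dorth}(d)) the linear form $\phi_{k,l}-\phi_{i,j}\in\mathcal D^\perp$; evaluating on $\varepsilon_{i,j}$ gives $-1$, so $\varepsilon_{i,j}\notin\mathcal D$. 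Thus your torus-curve count undercounts whenever such $T'$ occur. Note also that Section~2.2 only supplies \emph{necessary} conditions for membership in $\mathcal K^T$, so it cannot certify $\gamma_{ij}\subset\mathcal K^T$ as you assert.

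What actually furnishes the $\binom{s}{2}$ directions is the action of $Z(u)$, not of the torus: the element $w_t\in Z(u)$ with $e_i\mapsto e_i+te_j$ and $e_{i-r}\mapsto e_{i-r}+te_{j-r}$ moves $F_{\overline T}$ along the line $t(\varepsilon_{i,j}+\varepsilon_{i-r,j-r})$, so $\varepsilon_{i,j}+\varepsilon_{i-r,j-r}\in\mathcal D$ (Lemma~\ref{lemme_D_Dorth}(a)). This is exactly a ``diagonal'' vector inside the two-dimensional $H_u$-weight space you identified, and it can lie in $\mathcal D$ even when neither $\varepsilon_{i,j}$ nor $\varepsilon_{i-r,j-r}$ does. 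The $H_u$-weight decomposition alone cannot detect this; you need the extra symmetry coming from $Z(u)$.

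Finally, the spanning step you defer is where the paper does the substantive work: it produces explicit linear forms in $\mathcal D^\perp$ (Lemma~\ref{lemme_D_Dorth}(b),(d)) by writing the rank bounds of Proposition~\ref{proposition_inegalites} as vanishing minors in $\Omega$ and extracting their degree-one parts. A dimension count against the vectors from (a) and (c) then forces equality. Section~2.3 does not enter the spanning argument; its role is confined to recognizing, in (c) and (d), which swaps $T'$ satisfy $F_{T'}\in\mathcal K^T$.
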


\medskip

\begin{nota} Fix some conventional notation.
Let $\mathbb{C}$ denote the field of complex numbers. In fact all our constructions
and results hold for any algebraically closed field.
Let $\#A$ be the number of elements in the set $A$.
In what follows 
flags will be denoted by $(V_0\subset...\subset V_n)$
or $(V_0,...,V_n)$ or $F$,
Young diagrams will be denoted by $Y,Y',...$
standard tableaux by $T,S,...$
and row-standard tableaux
by $T',T'',...$
Other pieces of notation will be introduced in what follows.
\end{nota}

\section{The geometry of ${\mathcal B}_u$ and the combinatorics of Young}

\subsection{Young diagram $Y(u)$}
\ 
\medskip

\label{section:diagrammes_Young}

Recall that a {\em Young diagram} is a collection of boxes displayed along
left-adjusted rows of decreasing lengths.
For example
\[Y=\yng(3,3,1)
\]
is a Young diagram with $7$ boxes.
Let $m_1\geq ...\geq m_r$ be the sizes of the Jordan blocks of the
endomorphism $u$, and let $Y(u)$ be the Young diagram of rows of lengths $m_1,...,m_r$.
This diagram has $n=\mathrm{dim}\,V$ boxes.
The dimension of the variety ${\mathcal B}_u$ depends on the diagram $Y(u)$
as shown by the following theorem (see \cite[\S II.5.5]{Spaltenstein}).

\begin{theo}
\label{theoreme_dimension_Bu}
The variety ${\mathcal B}_u$ is equidimensional.
Moreover,
denoting by $n_1,...,n_s$ the lengths of the columns of $Y(u)$,
we have
\[\mathrm{dim}\,{\mathcal B}_u=\sum_{q=1}^s\frac{n_q(n_q-1)}{2}\,.\]
\end{theo}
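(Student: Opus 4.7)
\medskip

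\noindent\textbf{Proof proposal.} The plan is to induct on $n=\dim V$, using the natural projection $\pi:{\mathcal B}_u\to \mathbb{P}(\ker u)$ that sends a flag $(V_0\subset\ldots\subset V_n)$ to its line $V_1$. The base $n=0$ is trivial. The image of $\pi$ is all of $\mathbb{P}(\ker u)$ because any line $L\subset\ker u$ can be extended to a $u$-stable flag by taking a $u$-stable flag in $V/L$. The key observation is that the fiber $\pi^{-1}(L)$ is canonically isomorphic to the Springer fiber ${\mathcal B}_{\bar u}$ of the induced nilpotent $\bar u$ on $V/L$, to which we can apply the induction hypothesis.

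To control the Jordan type of $\bar u$, I would stratify $\mathbb{P}(\ker u)$ according to the filtration $K_q:=\ker u\cap \mathrm{im}\,u^{q-1}$, $q=1,\ldots,s+1$, with $K_{s+1}=0$. A standard computation shows $\dim K_q=n_q$ (the number of Jordan blocks of size $\geq q$), so the locally closed pieces $U_q:=\mathbb{P}(K_q)\setminus \mathbb{P}(K_{q+1})$ have dimension $n_q-1$. For $L\in U_q$ the Jordan type of $\bar u$ on $V/L$ is obtained from $Y=Y(u)$ by removing one box from the bottom of the $q$-th column, so its columns have lengths $n_1,\ldots,n_{q-1},n_q-1,n_{q+1},\ldots,n_s$. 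Since $GL(V)$ acts transitively on pairs (Jordan basis of $u$, line $L$ in a fixed $U_q$), the restriction $\pi^{-1}(U_q)\to U_q$ is a locally trivial fibration with fiber ${\mathcal B}_{\bar u}$.

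By the induction hypothesis each ${\mathcal B}_{\bar u}$ (for $L\in U_q$) is equidimensional of dimension $\binom{n_q-1}{2}+\sum_{p\neq q}\binom{n_p}{2}$. Hence $\pi^{-1}(U_q)$ is equidimensional of dimension
\[(n_q-1)+\binom{n_q-1}{2}+\sum_{p\neq q}\binom{n_p}{2}=\binom{n_q}{2}+\sum_{p\neq q}\binom{n_p}{2}=\sum_{p=1}^s\binom{n_p}{2},\]
using the telescoping identity $(n_q-1)+\binom{n_q-1}{2}=\binom{n_q}{2}$. The crucial point is that this value is \emph{independent of $q$}. Therefore every piece $\pi^{-1}(U_q)$ has the same dimension, and since ${\mathcal B}_u=\bigsqcup_q \pi^{-1}(U_q)$, the whole variety ${\mathcal B}_u$ is equidimensional of that common dimension.

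The main obstacle is the middle step: one has to justify rigorously that $\pi^{-1}(U_q)$ is a locally trivial fiber bundle over $U_q$ (so that its dimension is the sum of the dimensions of base and fiber, and its equidimensionality follows from that of the fiber), and one has to argue that the closure of each irreducible component of $\pi^{-1}(U_q)$ in ${\mathcal B}_u$ is genuinely an irreducible component of ${\mathcal B}_u$, so that all components of ${\mathcal B}_u$ arise this way and all have the computed dimension. Both points can be handled by using the $GL(V)$-action on pairs of Jordan bases and lines in $U_q$ to produce local trivializations, and by noting that an irreducible component of ${\mathcal B}_u$ of dimension $d$ must meet some $\pi^{-1}(U_q)$ in an open subset, forcing $d$ to equal $\sum_p\binom{n_p}{2}$.
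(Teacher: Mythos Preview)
Your argument is correct and is essentially the classical inductive proof of this result. Note, however, that the paper does not give its own proof of this theorem: it simply quotes the statement from \cite[\S II.5.5]{Spaltenstein}, so there is no proof in the paper to compare against. What you have written is, in outline, the standard Spaltenstein--Steinberg argument.

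One small imprecision worth fixing: when you write that ``$GL(V)$ acts transitively on pairs (Jordan basis of $u$, line $L$ in a fixed $U_q$)'', the group $GL(V)$ does not preserve $u$ or the filtration $K_\bullet$. The relevant group is the centralizer $Z(u)\subset GL(V)$: it preserves each $K_q$ and acts transitively on each nonempty stratum $K_q\setminus K_{q+1}$ (any $v$ in this stratum can be taken as the end vector of a Jordan block of size exactly $q$, and such blocks exist precisely because $n_q>n_{q+1}$). With that correction your local-triviality claim goes through, and the bookkeeping you flag in the last paragraph is exactly what is needed: an irreducible component $C$ of $\mathcal{B}_u$ lies in some $\pi^{-1}(\mathbb{P}(K_q))$ with $q$ maximal, is then automatically a component of that closed set, so $C\cap\pi^{-1}(U_q)$ is a component of the equidimensional variety $\pi^{-1}(U_q)$, forcing $\dim C=\sum_p\binom{n_p}{2}$.
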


\bigskip

\noindent
{\em Two-column case.}
If $u^2=0$, then the diagram $Y(u)$ has (at most) two columns.
When $Y(u)$ has two columns of lengths $r$ and $s$,
we get \[\mathrm{dim}\,{\mathcal B}_u=\frac{r(r-1)}{2}+\frac{s(s-1)}{2}\,.\]

\subsection{Components of ${\mathcal B}_u$ parameterized by standard tableaux}

\label{section_standard}
\ 
\medskip

From now on, set for simplicity $Y=Y(u)$.

\medskip

A {\em standard tableau of shape $Y$} is a numbering of the boxes of $Y$
from $1$ to $n$ such that numbers increase in
each row from left to right and in each column from top to bottom.
We denote by ${\mathcal T}(Y)$ the set of standard tableaux of shape $Y$. \\
For example
\[T=\young(135,267,4)
\]
For $T\in{\mathcal T}(Y)$, 
let $T_{|i}$ for $i\leq n$ be the subtableau obtained by deleting boxes with numbers
$i+1,...,n$. The shape of the subtableau $T_{|i}$
is a subdiagram $Y_i^T\subset Y$ with $i$ boxes. The standard tableau $T$ can be regarded as the
maximal chain of subdiagrams
$\emptyset=Y_0^T\subset Y_1^T\subset...\subset Y_n^T=Y$.

\smallskip

Let $F=(V_0,...,V_n)\in{\mathcal B}_u$ be a $u$-stable flag.
For $i\in\{0,...,n\}$ the restriction map $u_{|V_i}:V_i\rightarrow V_i$
is a nilpotent endomorphism. Let $Y_i(F)=Y(u_{|V_i})$
be the Young diagram representing the Jordan form of $u_{|V_i}$ in the sense of section
\ref{section:diagrammes_Young}.
The diagrams $Y_0(F),...,Y_n(F)$ form an increasing sequence of subdiagrams
of $Y$. We set
\[{\mathcal B}_u^T=\{F\in{\mathcal B}_u:Y_i(F)=Y_i^T\ \forall i\}.\]
The subsets ${\mathcal B}_u^T$, for $T$ running over ${\mathcal T}(Y)$, form
a partition of ${\mathcal B}_u$.
Setting ${\mathcal K}^T=\overline{{\mathcal B}_u^T}$,
we have (see \cite[\S II.5.4--5]{Spaltenstein}):

\begin{theo}
\label{fait_Steinberg}
Let $T\in{\mathcal T}(Y)$ be standard.
The subset ${\mathcal K}^T\subset {\mathcal B}_u$
is
an irreducible component of ${\mathcal B}_u$.
Every component of ${\mathcal B}_u$ is obtained in that way.
\end{theo}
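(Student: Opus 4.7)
The plan is to translate the singularity criterion into a tangent-space computation at the point $F_{\overline{T}}$. By the result announced in section~2, $\mathcal{K}^T$ is nonsingular if and only if $F_{\overline{T}}$ is a smooth point of $\mathcal{K}^T$; combined with Theorem~\ref{theoreme_dimension_Bu}, this reduces the theorem to the identity
\[
\dim T_{F_{\overline{T}}} \mathcal{K}^T \;=\; \tfrac{s(s-1)}{2} \;+\; \#\{T' \in \mathcal{X}(Y) : F_{T'} \in \mathcal{K}^T\},
\]
because ``singular'' is then equivalent to the right-hand side strictly exceeding $r(r-1)/2 + s(s-1)/2 = \dim \mathcal{K}^T$, i.e.\ to the count in $\mathcal{X}(Y)$ exceeding $r(r-1)/2$.

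I would compute the tangent space using the $H$-action. The torus $H$ stabilizes $\mathcal{K}^T$ and fixes $F_{\overline{T}}$, so $T_{F_{\overline{T}}} \mathcal{K}^T$ is an $H$-submodule of the ambient tangent space $T_{F_{\overline{T}}} \mathcal{B}$. In the flag variety $\mathcal{B}$, the weight spaces at an $H$-fixed flag are all one-dimensional and indexed by the unordered pairs $\{i,j\} \subset \{1,\dots,n\}$; consequently every weight space of $T_{F_{\overline{T}}} \mathcal{K}^T$ is $0$- or $1$-dimensional, and $\dim T_{F_{\overline{T}}} \mathcal{K}^T$ simply counts the weights that appear. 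Each pair $\{i,j\}$ is the tangent direction at $F_{\overline{T}}$ of a unique $H$-invariant rational curve $C_{ij} \subset \mathcal{B}$ whose other $H$-fixed point is the flag obtained from $F_{\overline{T}}$ by interchanging $i$ and $j$ in the Jordan labelling; this curve meets $\mathcal{B}_u$ along more than a point exactly when the resulting filling of $Y$ is row-standard, in which case its second endpoint is $F_{T'}$. The row-standard single-swap tableaux split into the $s(s-1)/2$ swaps internal to the second column of $\overline{T}$ (both entries $>r$) and the set $\mathcal{X}(Y)$ of swaps with at least one entry $\le r$.

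The heart of the argument consists of two claims. First, the weight $\{i,j\}$ belongs to $T_{F_{\overline{T}}} \mathcal{K}^T$ if and only if $F_{T'} \in \mathcal{K}^T$: the ``only if'' is immediate since $F_{T'}$ is the limit along $C_{ij}$ of the $H$-orbit and $\mathcal{K}^T$ is closed, while the converse requires showing that $C_{ij} \subset \mathcal{K}^T$, which reduces to identifying the unique standard tableau $T_{ij}$ such that the open $\mathbb{G}_m$-orbit on $C_{ij}$ lies in the stratum $\mathcal{B}_u^{T_{ij}}$ and then checking $T_{ij} = T$. Second, every flag $F_{T'}$ associated with a column-two swap belongs to $\mathcal{K}^T$; this should follow from the section~2.3 membership criterion because such a swap preserves both the partial flag $(V_0, \dots, V_r)$ and the intersections $V_i \cap \ker u$, which are the data distinguishing the components of $\mathcal{B}_u$. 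Granting these, the displayed identity follows and the theorem is proved. The main obstacle is the ``if'' direction of the first claim together with the always-in-the-component statement for column-two swaps; both rely on combining the explicit relations on elements of $\mathcal{K}^T$ from section~2.2 with the combinatorial criterion of section~2.3, read off carefully on the row-standard neighbours of $\overline{T}$.
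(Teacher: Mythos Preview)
Your proposal does not address the stated theorem at all. The statement labeled \texttt{fait\_Steinberg} is the classical result that the closures ${\mathcal K}^T=\overline{{\mathcal B}_u^T}$, for $T$ standard, are exactly the irreducible components of ${\mathcal B}_u$; in the paper this is not proved but quoted from Spaltenstein \cite[\S II.5.4--5]{Spaltenstein}. What you have written is instead an outline of a proof of the \emph{main} theorem of the paper (Theorem~\ref{theorem_principal}), the singularity criterion in the two-column case. Nothing in your text touches the irreducibility of ${\mathcal K}^T$, the equality $\dim{\mathcal K}^T=\dim{\mathcal B}_u$, or the fact that the ${\mathcal B}_u^T$ exhaust ${\mathcal B}_u$, which are the ingredients of the statement you were asked about.

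Even viewed as a sketch for Theorem~\ref{theorem_principal}, there is a real gap. You assert that ``the torus $H$ stabilizes ${\mathcal K}^T$'', but the full diagonal torus $H$ does not preserve ${\mathcal B}_u$: only the subtorus of $H$ commuting with $u$ (those $h=\mathrm{diag}(t_1,\dots,t_n)$ with $t_{j}=t_{j+r}$ for $1\le j\le s$) does. Under that subtorus the weights on $T_{F_{\overline{T}}}{\mathcal B}$ are no longer pairwise distinct---for instance the directions $\varepsilon_{i,j}$ and $\varepsilon_{i+r,j+r}$ share the same weight---so your ``each weight space is $0$- or $1$-dimensional'' dichotomy fails, and with it the clean bijection between tangent directions and $H$-invariant curves $C_{ij}$. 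The paper handles exactly this $2$-dimensional weight space phenomenon via Lemma~\ref{lemme_D_Dorth}(a) and (d), producing the vectors $\varepsilon_{i,j}+\varepsilon_{i-r,j-r}\in{\mathcal D}$ and the linear forms $\phi_{i,j}-\phi_{i+r,j+r}\in{\mathcal D}^\perp$, rather than a pure curve-counting argument.
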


\subsection{Elements $F_{T'}\in{\mathcal B}_u$ parameterized by row-standard tableaux}
\ 
\medskip

\label{section_FT'}

\label{section_base_Jordan}

In this subsection, we fix a Jordan basis of $V$.
Since the lengths of the rows of $Y$ coincide with the sizes of the Jordan blocks of $u$,
it is possible to index the basis on the boxes of $Y$
so that the following is true.
Writing $e_x$ the vector of the basis associated to the box $x\in Y$,
we have $u(e_x)=0$ if $x$ is in the first column of $Y$,
and $u(e_x)=e_{x'}$ where $x'$ is the box just on the left of $x$ otherwise.

\smallskip

We call {\em row-standard tableau of shape $Y$}
a numbering of the boxes of $Y$
from $1$ to $n$ such that numbers increase in
each row from left to right.
We denote by ${\mathcal T}'(Y)$ the set of row-standard tableaux of shape $Y$. \\
For example
\[
T'=\young(256,137,4)
\]

For $T'\in{\mathcal T}'(Y)$,
the boxes $1,...,i$ of the subtableau $T'$
form a subset $X_i\subset Y$.
Let $F_{T'}=(V_0,V_1,...,V_n)$ be the flag defined by $V_i=\langle e_x:x\in X_i\rangle$ for every $i$.
It is easy to see that $F_{T'}$ belongs to ${\mathcal B}_u$.

Moreover,
let $T\in{\mathcal T}(Y)$ be standard and
suppose that each $i\in\{1,...,n\}$ belongs to the same column of $T$ and $T'$.
Then we can see that the flag $F_{T'}$ belongs to the
set ${\mathcal B}_u^T$.
Hence $F_{T'}$ belongs to the component ${\mathcal K}^T$. In particular,
considering $T$ as a row-standard tableau, we get $F_T\in{\mathcal B}_u^T$,
thus $F_T\in {\mathcal K}^T$.

\medskip

We prove the following

\begin{lemma}
\label{lemme_singularite_pointsfixes}
A component ${\mathcal K}\subset {\mathcal B}_u$ is nonsingular if and only
if every flag of the form $F_{T'}$ with $T'\in{\mathcal T}'(Y)$
which belongs to ${\mathcal K}$ is nonsingular.
\end{lemma}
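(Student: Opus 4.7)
The plan is to reduce the problem to a statement about fixed points of a torus and apply the Borel fixed-point theorem. The ``only if'' direction is tautological: a nonsingular variety is nonsingular at every point. The substance is in the converse.

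To set this up I first need a $\mathbb{C}^*$-action on ${\mathcal B}_u$ whose fixed points are precisely the flags $F_{T'}$ for $T'\in{\mathcal T}'(Y)$. Consider $\tau:\mathbb{C}^*\to GL(V)$ acting on the Jordan basis vector $e_x$ by multiplication by $t^{\rho(x)}$, where $\rho(x)$ denotes the row of the box $x$. Because $u$ sends $e_x$ to $e_{x'}$ with $x$ and $x'$ in the same row, $\tau(t)$ commutes with $u$, so $\tau$ preserves ${\mathcal B}_u$. The fact that the weights $\rho(x)$ separate different rows means that $\tau$-fixed flags in ${\mathcal B}_u$ are precisely those for which every $V_i$ decomposes as a sum of initial segments of rows; unwinding the combinatorics exactly as in section 1, these are precisely the flags $F_{T'}$. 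Furthermore, since $\tau(t)$ conjugates $u$ to itself, the Jordan types $Y_i(F)$ are invariant along $\tau$-orbits, so each stratum ${\mathcal B}_u^T$ and therefore each component ${\mathcal K}^T$ is $\tau$-stable.

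With that in hand the rest of the argument is standard. Suppose every $F_{T'}\in{\mathcal K}$ is a smooth point of ${\mathcal K}$. The singular locus $\mathrm{Sing}({\mathcal K})$ is a closed subvariety of the projective variety ${\mathcal K}$, and it is invariant under the automorphisms $\tau(t)$. If it were non-empty, the Borel fixed-point theorem applied to the connected solvable group $\mathbb{C}^*$ would furnish a $\tau$-fixed point inside $\mathrm{Sing}({\mathcal K})$, necessarily of the form $F_{T'}$, contradicting the hypothesis. Hence $\mathrm{Sing}({\mathcal K})=\emptyset$.

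The only delicate step is the choice of $\tau$: one needs an action that simultaneously preserves ${\mathcal B}_u$, which forces the weights to be constant along each row and thus rules out the full torus $H$, and yet separates different rows so that its fixed-point set does not exceed $\{F_{T'}\}$. Once $\tau$ is in place, the Borel argument runs without further input.
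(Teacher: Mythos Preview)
Your proof is correct and follows the same overall strategy as the paper: exhibit a one-parameter torus acting on ${\mathcal B}_u$ whose fixed points are exactly the $F_{T'}$, then apply a Borel/limit argument to the closed, torus-stable singular locus.

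The one noteworthy difference is the choice of torus. You take weights constant along rows, so that $\tau(t)\in Z(u)$; invariance of each component is then immediate, but identifying the fixed points requires the extra (easy) observation that a $u$-stable subspace of a single Jordan block is an initial segment. The paper instead chooses \emph{pairwise distinct} weights $\epsilon_x$ with $\epsilon_{x'}=\epsilon_x+1$ along each row, giving $h_t u = t^{-1} u h_t$; this torus does not lie in $Z(u)$ yet still preserves ${\mathcal B}_u$, and its fixed points are already isolated in the full flag variety. In particular, your closing remark that preserving ${\mathcal B}_u$ ``forces the weights to be constant along each row'' is not accurate---the paper's construction is a counterexample---though this does not affect the validity of your argument.
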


\begin{proof}[Proof.]
The implication $\Rightarrow$ is immediate.
To prove the other implication, suppose there is a singular $F\in {\mathcal K}$.
Let $H\subset GL(V)$ be the subgroup of diagonal automorphisms with
respect to the basis. Observe that
the flags $F_{T'}$ for $T'$ running over ${\mathcal T}'(Y)$ are
the elements of ${\mathcal B}_u$
fixed by $H$ for its natural action on flags.
However, this action does not leave ${\mathcal B}_u$ invariant.
Let us construct a subtorus $H'\subset H$
with the same fixed points
and leaving ${\mathcal B}_u$ invariant.
To this end, we choose pairwise distinct numbers $\epsilon_x$
associated to the boxes $x\in Y$, so
that $\epsilon_{x'}=\epsilon_x+1$ if $x'$ is the box on the left of $x$.
For $t\in\mathbb{C}^*$, let $h_t\in GL(V)$ be defined by $h_t(e_x)=t^{\epsilon_x}e_x$
for any $x\in Y$.
Then $H'=(h_t)_{t\in\mathbb{C}^*}$ is a subtorus of $H$.
Since we have $h_tu=t^{-1}uh_t$ for any $t$,
the natural action of $H'$ on flags leaves invariant
the Springer fiber ${\mathcal B}_u$ and all its components.
As the $\epsilon_x$'s are pairwise distinct,
the flags $F_{T'}$ for $T'$ row-standard
are the $H'$-fixed points of ${\mathcal B}_u$. \\
The curve $\{h_tF:t\in\mathbb{C}^*\}$ admits a limit at the infinity
and this limit is necessarily a fixed point of $H'$, hence it is some $F_{T'}$
with $T'\in{\mathcal T}'(Y)$.
Since $F$ is singular, the point $h_tF$ is singular
for any $t$, and finally $F_{T'}$ is also a
singular point of the component.
\end{proof}

\section{Fixed points of the components in the $2$-column case}

\label{section_pointsfixes}

From now on, we suppose that the diagram $Y=Y(u)$ has two columns
of lengths $r$ and $s$, with $r\geq s$. As in section \ref{section_base_Jordan}
we fix a Jordan basis indexed on the boxes of $Y$.
For convenience we write $e_1,...,e_r$ the vectors of the basis associated to the boxes
of the first column of $Y$ from top to bottom, and $e_{r+1},...,e_n$ the vectors
associated to the boxes of the second column. Thus we have $u(e_i)=0$ for $i\leq r$
and $u(e_i)=e_{i-r}$ for $i\geq r+1$.

\subsection{Property of the flag $F_{\overline{T}}$ associated to the tableau $\overline{T}$}
\ 
\medskip

\label{section_Tbarre}

Let $\overline{T}$ be the tableau of shape $Y$ 
numbered from top to bottom by $1,...,r$
in the first column and by  $r+1,...,n$
in the second column:
\begin{center}
\begin{picture}(50,57)(0,0)
\put(10,26){$\overline{T}=\young(\ \ ,\ \ ,sn,\ ,r)$}
\put(34,47.7){1}
\put(42,47.7){r\!\mbox{\scriptsize$+$}\!1}
\put(35,35){\vdots}
\put(46,35){\vdots}
\put(35,12){\vdots}
\end{picture}
\end{center}
Thus for every $i$, the $i$-th subspace of the flag $F_{\overline{T}}$
is generated by the vectors $e_1,...,e_i$ of the basis.
The flag $F_{\overline{T}}$ satisfies the following properties.

\begin{proposition}
\label{proposition_singularite_FTbarre}
(a) The flag $F_{\overline{T}}$ belongs to every irreducible component
of ${\mathcal B}_u$. \\
(b) A component ${\mathcal K}\subset {\mathcal B}_u$ is nonsingular
if and only if $F_{\overline{T}}$ is nonsingular in ${\mathcal K}$.
\end{proposition}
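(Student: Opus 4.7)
For Part (a), my plan is to adapt the torus argument of Lemma~\ref{lemme_singularite_pointsfixes}, choosing the weights $\epsilon_x$ carefully so that the linear ordering $\epsilon_{e_1}>\epsilon_{e_2}>\cdots>\epsilon_{e_n}$ holds. This is compatible with the required relation $\epsilon_{x'}=\epsilon_x+1$ (for $x'$ the left neighbor of $x$): for instance, taking $\epsilon_{e_i}=1-i\delta$ for $i\leq r$ and $\epsilon_{e_{r+i}}=-i\delta$ for $i\leq s$, with $\delta>0$ small and irrational, produces such a system. With this choice, the attracting fixed point as $t\to+\infty$ of a flag lying in the big Schubert cell of the full flag variety relative to the ordered basis $e_1,\ldots,e_n$ is precisely the coordinate flag $F_{\overline{T}}$. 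It then suffices to exhibit a point $F\in\mathcal{B}_u^T$ meeting this big cell; I expect to build one explicitly from the standard tableau $T$, using the two-column structure (making, at each step of the flag, a suitably generic choice of a vector compatible with $T$). The curve $\{h_tF\}$ then lies in $\mathcal{B}_u^T$ and has limit $F_{\overline{T}}$ at infinity, so $F_{\overline{T}}\in\overline{\mathcal{B}_u^T}=\mathcal{K}^T$.

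For Part (b), the implication $\Rightarrow$ is trivial. For the converse, combining with Lemma~\ref{lemme_singularite_pointsfixes} reduces the problem to showing: if $F_{\overline{T}}$ is nonsingular in $\mathcal{K}^T$, then every row-standard $T'$ with $F_{T'}\in\mathcal{K}^T$ gives a nonsingular $F_{T'}$. The strategy is to construct, for each such $T'$, a one-parameter subgroup $\mu:\mathbb{C}^*\to GL(V)$ preserving $\mathcal{B}_u$ (and hence, by connectedness, preserving the component $\mathcal{K}^T$) such that $\lim_{t\to 0}\mu(t)\cdot F_{T'}=F_{\overline{T}}$. Since the singular locus of $\mathcal{K}^T$ is closed and $\mu$-stable, the singularity of $F_{T'}$ would be inherited by every $\mu(t)F_{T'}$ and thence by the limit $F_{\overline{T}}$, contradicting the hypothesis.

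The main obstacle is the construction of $\mu$ in the converse direction of (b): since $H'$ fixes every $F_{T'}$, the desired $\mu$ cannot live in $H'$. Natural candidates are one-parameter subgroups inside the identity component of the centralizer $Z_u=\{g\in GL(V):gu=ug\}$, or, more flexibly, subgroups satisfying a twisted commutation $gug^{-1}\in\mathbb{C}^*u$ (as the $h_t$'s themselves do). Pinning down a concrete $\mu$ that contracts a given $F_{T'}$ precisely onto $F_{\overline{T}}$ is a combinatorial question, essentially about raising the entries of $T'$ within each column to obtain $\overline{T}$, and should be tractable once one has the explicit characterization of the pairs $(T,T')$ with $F_{T'}\in\mathcal{K}^T$ that is announced for Section~2.3.
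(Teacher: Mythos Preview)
Your proposal outlines a plausible strategy but leaves the essential constructions undone in both parts, and the paper's proof closes these gaps by a more unified argument.

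For (a), two issues. First, a technical one: the weights $\epsilon_{e_i}=1-i\delta$ with irrational $\delta$ do not define an algebraic one-parameter subgroup of $GL(V)$, so the ``limit at infinity'' is not a priori meaningful over an algebraically closed field. This is repairable: drop the normalization $\epsilon_{x'}=\epsilon_x+1$ and use instead $\epsilon_{x'}=\epsilon_x+N$ for an integer $N\geq r$ (so $h_tuh_t^{-1}=t^{-N}u$ still preserves ${\mathcal B}_u$), taking e.g.\ $\epsilon_{e_i}=N-i$ for $i\leq r$ and $\epsilon_{e_{r+i}}=-i$ for $i\leq s$. Second, and more seriously, your argument rests on producing some $F\in{\mathcal B}_u^T\cap\Omega$, which you do not construct; without it the torus limit could land at a fixed point $F_{T''}$ other than $F_{\overline{T}}$. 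This step is precisely what carries the content of (a), and ``I expect to build one'' is not a proof.

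For (b), you correctly identify the mechanism---a one-parameter family in (or near) $Z(u)$ degenerating $F_{T'}$ to $F_{\overline{T}}$---but you do not construct it. Deferring to the characterization of Section~2.3 is unnecessary (and reorders the logic of the paper); in fact the paper proves the needed degeneration directly and elementarily.

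The paper's proof handles (a) and (b) simultaneously by showing, for every row-standard $T'$, that $F_{\overline{T}}\in\overline{Z(u)\cdot F_{T'}}$. The argument is an induction on the first entry where $T'$ differs from $\overline{T}$: one writes down an explicit one-parameter family $w_t\in Z(u)$ (a transvection on the Jordan basis, adjusted so as to commute with $u$) whose limit at infinity sends $F_{T'}$ to some $F_{\widetilde{T'}}$ agreeing with $\overline{T}$ on one more entry. Since $Z(u)$ is connected and stabilizes each component, this gives both that $F_{\overline{T}}$ lies in every component (take any $F_{T'}$ in the component) and that singularity at any $F_{T'}\in{\mathcal K}$ propagates to $F_{\overline{T}}$. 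This is exactly the ``$\mu$ inside $Z(u)$'' you were looking for in (b), and it also renders your separate torus argument for (a) superfluous.
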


\begin{proof}[Proof.]
Let $Z(u)\subset GL(V)$ be the centralizer of 
$I+u$. The natural action of $Z(u)$ on
flags leaves ${\mathcal B}_u$ invariant. Moreover, the group $Z(u)$ is
connected, since it is an open subset of the vector subspace of
$\mathrm{End}(V)$ formed by the endomorphisms which commute with
$u$. Hence $Z(u)$ leaves each component of ${\mathcal B}_u$ invariant.
Applying Lemma \ref{lemme_singularite_pointsfixes}, and using the
observation made just before Lemma \ref{lemme_singularite_pointsfixes}
that a component always contains some $F_{T'}$, it is sufficient to prove
that the flag $F_{\overline{T}}$ belongs to the closure of
the $Z(u)$-orbit of $F_{T'}$ for any $T'\in{\mathcal T}'(Y)$.
We reason by induction on the first entry which has not the same
place in $T'$ and $\overline{T}$. If $T'=\overline{T}$, then there
is nothing to prove. Suppose $T'\not=\overline{T}$, and take
$i\in\{1,...,n\}$ minimal which has not the same place in
$\overline{T}$ and $T'$. Write $F_{T'}=(V_0,...,V_n)$. We
have thus $V_{i-1}=\langle e_1,...,e_{i-1}\rangle$ and
$V_i=\langle e_1,...,e_{i-1},e_j\rangle$ for some $j>i$. For $t\in\mathbb{C}$ let
$w_t\in GL(V)$ be the automorphism such that $w_t(e_j)=e_j+te_i$
and $w_t(e_k)=e_k$ for $1\leq k\leq n$ with
$k\notin\{j-r,j,j+r\}$, and
\\{}\quad
\begin{tabular}{l}
- $w_t(e_{j-r})=e_{j-r}+te_{i-r}$ \ if $i\geq r+1$,\\
- $w_t(e_{j-r})=e_{j-r}$ \ if $i\leq r$ and $j\geq r+1$,\\
- $w_t(e_{j+r})=e_{j+r}+te_{i+r}$ \ if $j+r\leq n$. \end{tabular}\\
We have $w_t\in Z(u)$. Moreover, the curve $\{w_tF_{T'}:t\in\mathbb{C}\}$ admits a limit at the infinity which is the flag
$F_{\widetilde{T'}}$ associated to some tableau
$\widetilde{T'}\in{\mathcal T}'(Y)$ such that the entries
$1,...,i$ have the same
place in $\overline{T}$ and $\widetilde{T'}$. By induction, we
have $F_{\overline{T}}\in\overline{Z(u).F_{\widetilde{T'}}}$. Since $F_{\widetilde{T'}}\in\overline{Z(u).F_{{T'}}}$, we get
$F_{\overline{T}}\in\overline{Z(u).F_{T'}}$. The
proof is complete. 
\end{proof}

\subsection{Some relations satisfied by $F\in {\mathcal K}^T$}
\ 
\medskip

Let $F=(V_0,...,V_n)\in{\mathcal B}_u$. Let $0\leq i<j\leq n$.
The subspaces $V_i$ and $V_j$ are both invariant by $u$, hence the
quotient map $u_{|V_j/V_i}:V_j/V_i\rightarrow V_j/V_i$ can be
considered, and it is still a nilpotent map. First, we show the
following formula.

\begin{lemma}
\label{lemme_calcul_rang}
We have:
$\mathrm{rank}\,u_{|V_j/V_i}=\mathrm{dim}\,(V_i+u(V_j))-i$.
\end{lemma}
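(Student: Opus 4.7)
The plan is to identify the image of the induced nilpotent map $u_{|V_j/V_i}$ explicitly and then take dimensions. Concretely, because $V_i$ and $V_j$ are $u$-stable with $V_i \subset V_j$, the linear map $u : V \to V$ descends to a well-defined map on $V_j/V_i$, sending the class of $v$ to the class of $u(v)$. Thus its image, computed inside $V_j/V_i$, equals $(u(V_j) + V_i)/V_i$.

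From there, the rank is just the dimension of this quotient:
\[
\mathrm{rank}\,u_{|V_j/V_i} = \dim\bigl((u(V_j)+V_i)/V_i\bigr) = \dim(u(V_j)+V_i) - \dim V_i = \dim(V_i + u(V_j)) - i,
\]
which is the desired formula. One should note at this step that $V_i + u(V_j)$ is indeed a subspace of $V_j$ (hence well-defined and finite-dimensional), since $V_i \subset V_j$ and $u(V_j) \subset V_j$ by $u$-stability.

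There is essentially no obstacle here: the lemma is a direct consequence of the fact that the image of an induced map between quotients is the quotient of the image, combined with the rank–nullity interpretation. The only thing worth double-checking is that the image in the quotient is really $(u(V_j)+V_i)/V_i$ rather than $u(V_j)/(V_i \cap u(V_j))$; the two are canonically isomorphic by the second isomorphism theorem, but for the dimension count it is cleaner to use the first expression so that the quotient by $V_i$ gives the term $-i$ directly.
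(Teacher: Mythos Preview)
Your argument is correct and in fact more direct than the paper's. You compute the image of $u_{|V_j/V_i}$ immediately as $(u(V_j)+V_i)/V_i$ and read off its dimension. The paper instead works from the kernel side: it applies the rank formula to the composite map $V_j\to V_j/V_i$, $x\mapsto u(x)+V_i$, obtaining $\mathrm{rank}\,u_{|V_j/V_i}=j-\dim(u^{-1}(V_i)\cap V_j)$, then breaks $\dim(u^{-1}(V_i)\cap V_j)$ into $\dim(V_j\cap\ker u)+\dim(V_i\cap u(V_j))$ and uses inclusion--exclusion on $V_i\cap u(V_j)$ to recover $\dim(V_i+u(V_j))$. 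Both routes are elementary; yours avoids the detour through the preimage and the cancellation of the auxiliary term $\dim u(V_j)$, at the cost of relying on the (admittedly obvious) identification of the induced image with $(u(V_j)+V_i)/V_i$. Either way the content is the same linear-algebra identity.
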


\begin{proof}[Proof.]
By applying the rank formula for the map
$V_j\rightarrow V_j/V_i,\,x\mapsto u(x)$, we get:
\[\mathrm{rank}\,u_{|V_j/V_i}=j-\mathrm{dim}\,u^{-1}(V_i)\cap V_j.\]
The map $u^{-1}(V_i)\cap V_j\rightarrow V_i\cap u(V_j),\,x\mapsto
u(x)$ is surjective and its kernel is $V_j\cap\mathrm{ker}\,u$,
hence
\[\mathrm{dim}\,u^{-1}(V_i)\cap V_j=\mathrm{dim}\,V_j\cap\mathrm{ker}\,u+\mathrm{dim}\,V_i\cap u(V_j).\]
On one hand, we have $\mathrm{dim}\,V_i\cap
u(V_j)=i+\mathrm{dim}\,u(V_j)-\mathrm{dim}\,(V_i+u(V_j))$. On the
other hand, the rank formula gives
$\mathrm{dim}\,V_j\cap\mathrm{ker}\,u=j-\mathrm{dim}\,u(V_j)$. The
lemma follows. 
\end{proof}

Let $T$ be standard. We associate to $T$ a row-standard tableau
$T^*$. Let $a_1<...<a_r$ (resp. $b_1<...<b_s$) be the entries of
the first (resp. second) column of $T$. We renumerate the entries of
the first column from $a_1^*$ to $a_r^*$:
\\[1mm]{}\quad
- Set $a_1^*=b_1-1$.\\[1mm]{}\quad - If $a_1^*,...,a_{p-1}^*$ have been
constructed for $p\in\{1,...,s\}$, then let $a_p^*$ be the maximal
element among the
$a\in\{a_1,...,a_r\}\setminus\{a_1^*,...,a_{p-1}^*\}$ such that
$a<b_p$. \\[1mm]{}\quad - For $p>s$, set
$a_p^*=\mathrm{Min}\,\{a_1,...,a_r\}\setminus\{a_1^*,...,a_{p-1}^*\}$.
\\[1mm] Then let $T^*$ be the tableau 
numbered from
top to bottom by $a_1^*,...,a_r^*$ 
 in the first column, and $b_1,...,b_s$ in the second
column. \\
For example
\[T=\young(13,25,4,6)
\qquad T^*=\young(23,45,1,6)
\]
As observed in section \ref{section_FT'}, 
since the content of the columns of $T$ and $T^*$ coincides,
the flag $F_{T^*}$ 
associated to $T^*$ belongs to the component ${\mathcal K}^T$. \\
Let $0\leq i<j\leq n$. Set
\[s_{j/i}^T=\#\{p:1\leq p\leq s\mbox{ and }i<a_p^*<b_p\leq j\}.\]
We prove the following

\begin{lemma}
\label{lemme_rang_generique}
Let $0\leq i<j\leq n$. The set
\[\{F=(V_0,...,V_n)\in {\mathcal K}^T:\mathrm{rank}\,u_{|V_j/V_i}=s_{j/i}^T\}\]
is a nonempty open subset of ${\mathcal K}^T$.
\end{lemma}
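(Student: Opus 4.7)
\textbf{Openness skeleton.} The rank of a linear map is lower semi-continuous: the set $\{F\in\mathcal{B}_u:\mathrm{rank}\,u_{|V_j/V_i}\geq k\}$ is open for every integer $k$, being cut out by the non-vanishing of certain minors. Hence $\{F\in\mathcal{K}^T:\mathrm{rank}\,u_{|V_j/V_i}\geq s_{j/i}^T\}$ is open in $\mathcal{K}^T$, and it will coincide with the set of the lemma as soon as we establish the uniform upper bound $\mathrm{rank}\,u_{|V_j/V_i}\leq s_{j/i}^T$ throughout $\mathcal{K}^T$.

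\textbf{Nonemptyness.} I would verify $\mathrm{rank}\,u_{|V_j/V_i}=s_{j/i}^T$ at the point $F_{T^*}$, which belongs to $\mathcal{B}_u^T\subset\mathcal{K}^T$ by the observation of section \ref{section_FT'}. In this flag the $p$-th column-1 box of $Y$ carries $e_p$ with $T^*$-label $a_p^*$, and the $p$-th column-2 box carries $e_{r+p}$ with label $b_p$. Since $u(e_{r+p})=e_p$, one reads off $u(V_j)=\langle e_p:b_p\leq j\rangle$ and $V_i\cap u(V_j)=\langle e_p:a_p^*\leq i,\,b_p\leq j\rangle$, and combining these with $a_p^*<b_p$ and Lemma \ref{lemme_calcul_rang} gives the claimed rank value $s_{j/i}^T$.

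\textbf{Upper bound on $\mathcal{K}^T$.} Since $\{\mathrm{rank}\,u_{|V_j/V_i}\leq s_{j/i}^T\}$ is closed in $\mathcal{B}_u$ and $\mathcal{K}^T=\overline{\mathcal{B}_u^T}$, it is enough to prove the bound on the dense open subset $\mathcal{B}_u^T$. For $F\in\mathcal{B}_u^T$ the dimension $\dim u(V_j)=p_j:=\#\{p:b_p\leq j\}$ is prescribed by $Y_j^T$, so Lemma \ref{lemme_calcul_rang} reduces the bound to the inequality $\dim(V_i\cap u(V_j))\geq p_j-s_{j/i}^T=\#\{p:a_p^*\leq i,\,b_p\leq j\}$.

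\textbf{Main obstacle.} Establishing this last inequality for $F\in\mathcal{B}_u^T$ is the crux, and is where the greedy construction of $T^*$ enters. For such $F$, the image $u(V_k)$ gains exactly one dimension whenever $k=b_p$, and the new direction lies in $V_{b_p-1}\cap\ker u$. I would argue by induction on $p$, using that $a_p^*$ is chosen as the largest column-1 entry less than $b_p$ still available, to match each pair $(a_p^*,b_p)$ with $a_p^*\leq i$ and $b_p\leq j$ to an independent element of $V_i\cap u(V_j)$. The three ingredients then combine to give both openness and nonemptyness of the set in the lemma.
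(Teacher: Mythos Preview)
Your plan takes a route quite different from the paper's. The paper does not argue via semicontinuity plus a pointwise upper bound; instead it shows directly that the $Z(u)$-orbit of $F_{T^*}$ is open in $\mathcal{K}^T$, by a dimension count: $\dim Z(u)=r^2+s^2$, and the stabilizer $Z(T^*)\subset Z(u)$ of $F_{T^*}$ satisfies $\dim Z(T^*)=r(r+1)/2+s(s+1)/2$ (verified by induction on $n$), so the orbit has the same dimension as $\mathcal{K}^T$. Since $\mathrm{rank}\,u_{|V_j/V_i}$ is $Z(u)$-invariant and equals $s_{j/i}^T$ on this orbit (that is your nonemptyness computation at $F_{T^*}$), the lemma follows at once. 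The upper bound you aim for is then deduced \emph{afterwards}, in Proposition~\ref{proposition_inegalites}, by semicontinuity from this open set; you are proposing to reverse that logical order.

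Your reversal can be made to work, but the matching sketch you give for the ``main obstacle'' is not yet a proof. Note, for instance, that the inclusion $u(V_{b_p})\subset V_{a_p^*}$ one might hope for is in general false on $\mathcal{B}_u^T$, so the ``new direction'' at step $b_p$ need not land in $V_i$ just because $a_p^*\le i$. What is actually needed is: for $F\in\mathcal{B}_u^T$, the set $S_j$ of column-1 labels at which $k\mapsto\dim(V_k\cap u(V_j))$ jumps satisfies ``at least $p$ elements of $S_j$ are $<b_p$'' for each $p\le p_j$ (this follows from $u(V_{b_p})\subset V_{b_p-1}$), and the greedy set $\{a_1^*,\dots,a_{p_j}^*\}$ minimizes $|S\cap\{1,\dots,i\}|$ among all $p_j$-subsets $S$ of the column-1 labels obeying this constraint. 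That extremality statement is the real combinatorial content of the $T^*$ construction; it yields to a short exchange argument once isolated, but your sketch does not isolate it. The paper's orbit computation sidesteps this issue entirely, at the cost of the inductive dimension count for $Z(T^*)$.
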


\begin{proof}[Proof.]
Recall that $Z(u)\subset GL(V)$ denotes the subgroup of elements
which commute with $u$. Observe that $\mathrm{rank}\,u_{|V_j/V_i}=s_{j/i}^T$
for any $F$ in the $Z(u)$-orbit of the flag $F_{T^*}$.
Thus, to prove the lemma, it is sufficient to prove 
that the $Z(u)$-orbit of the flag $F_{T^*}$
is an open subset of ${\mathcal K}^T$.
Let $Z(T^*)\subset Z(u)$ be the subgroup of elements
which fix the flag $F_{T^*}$.
It is sufficient to prove that
$\mathrm{dim}\,Z(T^*)=\mathrm{dim}\,Z(u)-\mathrm{dim}\,{\mathcal K}^T$. \\
Recall we have fixed a Jordan basis $(e_1,...,e_n)$ with $e_i=u(e_{i+r})$ for $i=1,...,s$.
Any $g\in Z(u)$ is thus determined by the images $ge_{s+1},...,ge_r$, which lie in $\mathrm{ker}\,u$,
and by $ge_{r+1},...,ge_n$.
Then we see that
$\mathrm{dim}\,Z(u)=(r-s)r+s(r+s)=r^2+s^2$.
By Theorem \ref{theoreme_dimension_Bu} we have 
$\mathrm{dim}\,{\mathcal K}^T=\mathrm{dim}\,{\mathcal B}_u={r(r-1)}/{2}+{s(s-1)}/{2}$.
Thus, we have to prove
\[
\mathrm{dim}\,Z(T^*)=\frac{r(r+1)}{2}+\frac{s(s+1)}{2}.
\leqno{\ \qquad(*)}
\]
We reason by induction on $n$, with immediate initialization for $n=1$.
Observe that $Z(T^*)$ has the same dimension as the vector space of endomorphisms
of $V$ commuting with $u$ and leaving stable each subspace in the flag $F_{T^*}$.
Then, using the definition of the flag $F_{T'}$ associated to a tableau $T'$, it is
straightforward to establish the formula
\begin{eqnarray}
\mathrm{dim}\,Z(T^*) & = & \#\{(p,q):1\leq p,q\leq r,\ s<q,\ a^*_p\leq a^*_q\} \nonumber \\
&& +\,\#\{(p,q):1\leq p\leq r,\ 1\leq q\leq s,\ a^*_p< b_q\} \nonumber \\
&& +\,\#\{(p,q):1\leq p\leq q\leq s,\ a^*_p\leq a^*_q\}. \nonumber
 \end{eqnarray}
We distinguish two cases. \\
A) Suppose that $1=a^*_p$ for some $p\leq s$.
Let $T^\flat$ be the tableau obtained from $T^*$
by removing the row containing $(1,b_p)$,
this tableau is row-standard up to 
moving $i\mapsto i-1$ if $i<b_p$ and $i\mapsto i-2$ if $i>b_p$.
By the above formula, we get
$\mathrm{dim}\,Z(T^*)=\mathrm{dim}\,Z(T^\flat)+r+s$.
By induction hypothesis, $\mathrm{dim}\,Z(T^\flat)=r(r-1)/2+s(s-1)/2$.
Thus $(*)$ follows. \\
B) Suppose that $1=a^*_p$ for $p>s$.
Let $T^\flat$ be the tableau obtained from $T^*$
by removing $1$,
this tableau is row-standard up to moving $i\mapsto i-1$, $\forall i=2,...,n$.
Likewise, the above formula implies
$\mathrm{dim}\,Z(T^*)=\mathrm{dim}\,Z(T^\flat)+r$.
By induction hypothesis, we have $\mathrm{dim}\,Z(T^\flat)=r(r-1)/2+s(s+1)/2$.
Again $(*)$ follows.
\end{proof}

We get now the following

\begin{proposition}
\label{proposition_inegalites}
Let $F=(V_0,...,V_n)\in{\mathcal B}_u$. 
If $F\in {\mathcal K}^T$, then \\
(a) $\mathrm{rank}\,u_{|V_j/V_i}\leq s_{j/i}^T$ for any $0\leq i<j\leq n$. \\
(b) $\mathrm{dim}(V_i+u(V_j))\leq s_{j/i}^T+i$ for any $0\leq i<j\leq n$.
\end{proposition}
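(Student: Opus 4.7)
The plan is to deduce both (a) and (b) from Lemma \ref{lemme_rang_generique} combined with a semicontinuity argument. The first observation is that (a) and (b) express the same inequality: Lemma \ref{lemme_calcul_rang} gives
\[\mathrm{rank}\,u_{|V_j/V_i}=\mathrm{dim}\,(V_i+u(V_j))-i,\]
so $\mathrm{rank}\,u_{|V_j/V_i}\leq s_{j/i}^T$ is equivalent to $\mathrm{dim}\,(V_i+u(V_j))\leq s_{j/i}^T+i$. It is therefore enough to prove (a).

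To establish (a), I would invoke the standard fact that the rank of a family of linear maps is a lower-semicontinuous function of the parameter. Concretely, as $F=(V_0,\ldots,V_n)$ varies in ${\mathcal B}_u$, the subspaces $V_i$ and $V_j$ are the fibers of the tautological flag bundles, so $u_{|V_j/V_i}$ is a morphism of vector bundles whose rank can be read off from polynomial minors in any local trivialization. Thus the subset
\[\{F\in {\mathcal K}^T:\mathrm{rank}\,u_{|V_j/V_i}\leq s_{j/i}^T\}\]
is closed in ${\mathcal K}^T$.

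By Lemma \ref{lemme_rang_generique}, this subset contains a nonempty open piece of ${\mathcal K}^T$ on which equality holds. Since ${\mathcal K}^T$ is irreducible by Theorem \ref{fait_Steinberg}, every nonempty open subset is dense; hence the closed locus above coincides with all of ${\mathcal K}^T$. This gives (a), and (b) follows immediately via Lemma \ref{lemme_calcul_rang}.

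The only nonformal step is the semicontinuity of the rank, which is completely standard once $V_i$ and $V_j$ are interpreted as fibers of the tautological subbundles on the flag variety and $u_{|V_j/V_i}$ as the induced bundle map. I do not anticipate any real obstacle beyond this routine bookkeeping.
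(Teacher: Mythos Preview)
Your argument is correct and is essentially the paper's own proof: the paper also notes that (a) and (b) are equivalent via Lemma~\ref{lemme_calcul_rang}, invokes Lemma~\ref{lemme_rang_generique} to get a nonempty open locus where equality holds, and then uses lower semicontinuity (phrased there for $F\mapsto\dim(V_i+u(V_j))$ rather than for the rank, which is the same thing up to the constant $i$) together with the irreducibility of ${\mathcal K}^T$ to conclude. Your explicit mention of Theorem~\ref{fait_Steinberg} for irreducibility and your bundle-map justification of semicontinuity simply make explicit what the paper leaves implicit.
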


\begin{proof}[Proof.]
By Lemma \ref{lemme_calcul_rang}, (a) and (b)
are equivalent. By Lemma \ref{lemme_calcul_rang} and Lemma \ref{lemme_rang_generique},
the set $\{F\in {\mathcal K}^T:\mathrm{dim}(V_i+u(V_j))= s_{j/i}^T+i\}$
is a nonempty open subset of ${\mathcal K}^T$.
Claim (b) follows from the lower semicontinuity
of the map $F\mapsto \mathrm{dim}(V_i+u(V_j))$.
\end{proof}

\subsection{A necessary and sufficient condition for $F_{T'}\in {\mathcal K}^T$}
\ 
\medskip

In this section, we characterize the row-standard tableaux $T'$ such that
the flag $F_{T'}$ belongs to the component ${\mathcal K}^T$.

\medskip

Let $T'\in{\mathcal T}'(Y)$ be row-standard.
Let $a'_1,...,a'_r$ (resp. $b'_1,...,b'_s$) 
be the entries from top to bottom in its first (resp. second) column.
For $0\leq i<j\leq n$, set 
\[s_{j/i}(T')=\#\{p:1\leq p\leq s\mbox{ and }i<a'_p<b'_p\leq j\}.\]
Then, writing $F_{T'}=(V_0,...,V_n)$, we have $\mathrm{rank}\,u_{|V_j/V_i}=s_{j/i}(T')$.
By Proposition \ref{proposition_inegalites}, we have
\[F_{T'}\in {\mathcal K}^T\ \Rightarrow\ s_{j/i}(T')\leq s_{j/i}^T\mbox{ for any $0\leq i<j\leq n$}.\]
The following theorem shows that this implication is in fact an equivalence.

\begin{theorem}
\label{theorem_pointsfixes}
Let $T'\in {\mathcal T}'(Y)$ be row-standard. The following conditions are equivalent. \\
(a) $F_{T'}\in {\mathcal K}^T$. \\
(b) $s_{j/i}(T')\leq s_{j/i}^T$ for any $0\leq i<j\leq n$.
\end{theorem}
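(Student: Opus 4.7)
The direction $(a)\Rightarrow(b)$ is immediate from Proposition~\ref{proposition_inegalites}(a): as recorded just before the theorem, for $F_{T'}=(V_0,\ldots,V_n)$ one has $\mathrm{rank}\,u_{|V_j/V_i}=s_{j/i}(T')$, so the rank inequality of Proposition~\ref{proposition_inegalites}(a) is exactly condition~(b).

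For the converse, my plan is to argue by induction on a combinatorial distance between $T'$ and the distinguished row-standard tableau $T^*$ introduced in Section~2.2. The base case $T'=T^*$ is the observation from Section~\ref{section_FT'} that $F_{T^*}\in\mathcal{K}^T$. For the inductive step, given $T'\neq T^*$ satisfying~(b), I would let $k$ be the smallest entry at which $T'$ and $T^*$ disagree, and produce a row-standard tableau $T''$ obtained from $T'$ by a single interchange involving $k$ such that $T''$ agrees with $T^*$ on $\{1,\ldots,k\}$, still satisfies~(b), and moreover $F_{T'}\in\overline{Z(u)\cdot F_{T''}}$. This last property is to be realized by the same type of one-parameter family $\{w_t\}\subset Z(u)$ used in the proof of Proposition~\ref{proposition_singularite_FTbarre}, namely of the form $w_t(e_\ell)=e_\ell+te_k$ suitably extended on row-partner basis vectors to lie in $Z(u)$, whose limit at infinity is a flag $F_{\widetilde{T''}}$ with $\widetilde{T''}$ obtained from $T''$ by the desired interchange. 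The induction hypothesis then yields $F_{T''}\in\mathcal{K}^T$, and the $Z(u)$-invariance of $\mathcal{K}^T$ (Proposition~\ref{proposition_singularite_FTbarre}) forces $F_{T'}\in\mathcal{K}^T$.

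The principal obstacle is the combinatorial step: verifying that the interchange does not violate any of the inequalities $s_{j/i}(T'')\leq s_{j/i}^T$. A case analysis is needed according to the relative positions of $k$ in $T'$ versus $T^*$, and in each case one must track how the set $\{p:i<a'_p<b'_p\leq j\}$ changes when $k$ is moved. The minimality of $k$ and the explicit construction of $T^*$ from $T$ given before Lemma~\ref{lemme_rang_generique} are what should allow control over these sets. I expect the subtlest case to be the one where the interchange occurs between an entry $k$ lying in a length-$2$ row of $T'$ and an entry lying in a length-$1$ row, since then the curve $\{w_tF_{T''}\}$ simultaneously moves a basis vector in the other column, and the resulting $\widetilde{T''}$ must be carefully read off before one can check that the perturbation of the statistics $s_{j/i}$ stays within the bounds imposed by~(b).
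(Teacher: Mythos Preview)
Your overall architecture---induction plus a one-parameter $Z(u)$-degeneration to a ``better'' tableau, followed by verification that condition~(b) persists---is exactly the paper's strategy. But the induction you set up differs from the paper's in a way that creates extra work and a potential obstruction.

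The paper does \emph{not} induct on the first place where $T'$ and $T^*$ disagree. It inducts on the first entry $i$ lying in different \emph{columns} of $T$ and $T'$, using the broader base case (from Section~\ref{section_FT'}) that $F_{T'}\in\mathcal{K}^T$ as soon as $T'$ and $T$ have the same column contents. Correspondingly, the swap is not chosen to place $i$ at its exact $T^*$-position; instead the paper makes a case distinction (cases~A and~B) and picks the partner for $i$ so as to guarantee that the resulting tableau $\widetilde{T'}$ is row-standard. Your scheme forces the swap $k\leftrightarrow\ell$ with $\ell$ the current occupant of the $T^*$-position of $k$; but then, for instance when $k$ and $\ell$ are both in the first column and $\ell>\omega_{T'}(k)$, the swapped tableau fails to be row-standard. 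The paper's case split is precisely what circumvents this, and its choice of partner is what drives the subcase analysis (A.1), (B.1), (B.2) verifying that~(b) survives.

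So the gap is concrete: a single interchange moving $k$ to its $T^*$-box does not in general produce a row-standard $T''$, and you have not indicated how to repair this. If you relax the target from ``agree with $T^*$ through $k$'' to ``agree with the column content of $T$ through $i$'' you recover the paper's argument, and the combinatorial verification you anticipate becomes the (still nontrivial) case analysis carried out there.
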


\begin{proof}[Proof of Theorem \ref{theorem_pointsfixes}.] \

\smallskip

It remains to prove that claim (b) implies (a).
First, we introduce some notation and convention.
For $i$ in the first column of $T'$,
we denote by $\omega_{T'}(i)$ its right neighbor entry,
and put $\omega_{T'}(i)=\infty$ if $i$ has no entry on its right in $T'$.
We set $k< \infty$ for any integer $k$, so that
the set $\{1,...,n,\infty\}$ is totally ordered.
Recall that $Z(u)\subset GL(V)$ denotes the subgroup of elements which
commute with $u$. The natural action of $Z(u)$ on flags leaves
${\mathcal B}_u$ and its components invariant.

We know that the flag $F_{T'}$ belongs to the component ${\mathcal K}^T$
when the columns of $T$ and $T'$ have the same entries.
Now, let $i\in\{1,...,n\}$ be
the minimal entry which lies in different columns in $T$ and $T'$, and reason by induction
on $i$.
Since $s_{i/0}(T')\leq s_{i/0}^T$, the number $i$ belongs 
to the first column of $T'$ and to the second column of $T$.
Thus $T'$ contains strictly more entries $>i$ in its second column
than $T$. Since $s_{n/i}(T')\leq s_{n/i}^T$,
there is $j>i$ in the second column of $T'$
whose entry on its left is some $j'\leq i$.
Take $j$ minimal. \\
We distinguish two cases:

\smallskip
A) Suppose that there is $i'\in\{i+1,...,j\}$ in the first column of $T'$ 
such that $\omega_{T'}(i)<\omega_{T'}(i')$.
Take $i'$ minimal, and let $\widetilde{T'}$ denote the tableau obtained 
by switching $i$ and $i'$ in $T'$. This tableau is row-standard:
indeed, by minimality of $j$ we always have $j\leq\omega_{T'}(i)$,
hence $i'\leq\omega_{T'}(i)$.

\smallskip
B) Suppose that 
$\omega_{T'}(i')\leq \omega_{T'}(i)$ for any
$i'\in\{i+1,...,j\}$ in the first column of $T'$.
Then let $\widetilde{T'}$ denote the tableau obtained 
by switching $i$ and $j$ in $T'$. Let us prove that this tableau is row-standard.
Assume the contrary: $\omega_{T'}(i)\leq j$. Set $l=\omega_{T'}(i)$.
By the hypothesis every $i'\in\{i,...,l\}$ in the first column of $T'$
has a right neighbor entry among $i,...,l$.
By definition of $j$, every $i'\in\{i,...,l\}$ in the second column of $T'$
has a left neighbor entry among $i,...,l$.
Thus $l-i+1$ is even and $s_{j/i}(T')=(l-i+1)/2$.
On the other hand, since $i$ is in the second column of $T$, we have
$s_{j/i}^T<(l-i+1)/2$. We get $s_{j/i}(T')>s_{j/i}^T$, which contradicts the hypothesis.
Hence $j<\omega_{T'}(i)$, and the tableau ${\widetilde{T'}}$ is row-standard.

\medskip
\noindent
In both cases, we have defined ${\widetilde{T'}}$ row-standard.

\medskip
\noindent
{\sc Claim.} {\em The flag $F_{T'}$ lies to the closure of the $Z(u)$-orbit
of the flag $F_{\widetilde{T'}}$.}

\begin{proof}[Proof of the claim.]
Recall that we have fixed a
Jordan basis of $u$.
We renumber the vectors of the basis from $e'_1$ to $e'_n$
so that $e'_1,...,e'_l$ generate the $l$-th subspace of $F_{T'}$.
For $t\in \mathbb{C}$ we define an
automorphism $\phi_t:V\rightarrow V$. As above we distinguish two cases:
Suppose we are in case A above. 
For $k\in\{1,...,n\}$ different from $i'$ and
$\omega_{T'}(i')$, we set $\phi_t\,e'_k=e'_k$. Next, set
$\phi_t\,e'_{i'}=e'_{i'}+te'_i$. If $\omega_{T'}(i')<\infty$, then set in addition
$\phi_t\,e'_{\omega_{T'}(i')}=e'_{\omega_{T'}(i')}+te'_{\omega_{T'}(i)}$.
Suppose we are in case B above. For $k\in\{1,...,n\}$ different
from $j$, set $\phi_t\,e'_k=e'_k$. In addition set
$\phi_t\,e'_j=e'_j+te'_i$. 
In both cases $\phi_t\in Z(u)$,
and the flag $F_{T'}$ is the limit at the infinity of the curve
$\{\phi_t\,F_{\widetilde{T'}}:t\in\mathbb{C}\}$. The claim follows.
\end{proof}

By the claim, it is sufficient to prove that the flag $F_{\widetilde{T'}}$ belongs to ${\mathcal K}^T$,
and by induction hypothesis, it is sufficient
to prove that we have $s_{b/a}(\widetilde{T'})\leq s_{b/a}^T$
for any $0\leq a<b\leq n$.
As above, we distinguish two cases.

\medskip \noindent
A)
Suppose we are in the case A above.
Then $\widetilde{T'}$ is the tableau obtained by switching $i$ and $i'$
in the tableau $T'$.
By the minimality of $j$, we have $j\leq \omega_{T'}(i)$, hence
$i<i'<\omega_{T'}(i)<\omega_{T'}(i')$.
We have to show that $s_{b/a}(\widetilde{T'})\leq s_{b/a}^T$.
If $s_{b/a}(\widetilde{T'})=s_{b/a}(T')$, then it is immediate.
We have $s_{b/a}(\widetilde{T'})=s_{b/a}(T')$ unless we are in the following subcase (A.1).

\smallskip
\noindent
(A.1) We suppose that $i\leq a<i'<\omega_{T'}(i)\leq b<\omega_{T'}(i')$. \\
Then $s_{b/a}(\widetilde{T'})=s_{b/a}(T')+1$,
hence it is sufficient to prove $s_{b/a}(T')<s_{b/a}^T$.
As $\omega_{T'}(i)\leq b$ and as $i$ is in the second column of $T$,
we have
$s_{b/i}(T')<s_{b/i-1}(T')\leq s_{b/i-1}^T= s_{b/i}^T$.
Moreover we have $s_{a/i}(T')\leq s_{a/i}^T$ by hypothesis.
As $a<j$ and by minimality of $j$, the entry on the left of any $l\in\{i+1,...,a\}$ in the second column
of $T'$ also belongs to $\{i+1,...,a\}$.
As $a<i'<\omega_{T'}(i)\leq b$, the entry on the right of any $l\in\{i+1,...,a\}$ in the first column
of $T'$ also belongs to $\{i+1,...,b\}$.
Thus $s_{b/i}(T')=s_{b/a}(T')+(a-i-s_{a/i}(T'))$,
so that 
$s_{b/a}(T')=s_{b/i}(T')+s_{a/i}(T')-(a-i)<s_{b/i}^T+s_{a/i}^T-(a-i)$.
We have $s_{b/i}^T\leq s_{b/a}^T+(a-i-s_{a/i}^T)$,
since $a-i-s_{a/i}^T$ is the number of rows of the subtableau of $T^*$
of entries $i+1,...,a$.
The desired inequality follows.

\medskip \noindent
B)
Suppose we are in the case B above.
Then $\widetilde{T'}$ is the tableau obtained by switching $i$ and $j$
in the tableau $T'$.
Let $j'$ denote the entry on the left of $j$ in $T'$.
As already observed, we have $j<\omega_{T'}(i)$.
Moreover $j'<i$.
We have to show that $s_{b/a}(\widetilde{T'})\leq s_{b/a}^T$.
If $s_{b/a}(\widetilde{T'})=s_{b/a}(T')$, then it is immediate.
We have $s_{b/a}(\widetilde{T'})=s_{b/a}(T')$ unless we are in one of the following subcases.

\smallskip
\noindent
(B.1) We suppose that $j'<i\leq a<j<\omega_{T'}(i)\leq b$. \\
The proof is exactly the same as in the subcase (A.1)
with $j$ instead of $i'$.

\smallskip
\noindent
(B.2) We suppose that $a<j'<i\leq b<j<\omega_{T'}(i)$. \\
Then $s_{b/a}(\widetilde{T'})=s_{b/a}(T')+1$,
hence it is sufficient to prove $s_{b/a}(T')<s_{b/a}^T$.
By minimality of $j$, for any $l\in\{a+1,...,i-1\}$ in the first column of $T'$, 
we have $\omega_{T'}(l)\notin\{i,...,b\}$.
It follows $s_{b/a}(T')=s_{b/i-1}(T')+s_{i-1/a}(T')\leq s_{b/i-1}^T+s_{i-1/a}^T$.
Let $i^*$ be the entry on the left of $i$ in the tableau $T^*$.
If $i^*\in\{a+1,...,i-1\}$, then we have 
$s_{b/a}^T\geq s_{b/i-1}^T+s_{i-1/a}^T+1$ and the desired inequality ensues.
Suppose now that $i^*\leq a$.
By definition of $T^*$, for any $l\in\{a+1,...,i-1\}$ in the first column
of $T^*$, we have $l<\omega_{T^*}(l)<i$.
Hence $s_{i-1/a}^T$ is equal to the number of elements $l\in\{a+1,...,i-1\}$ in the first column
of $T$.
The entry $j'$ belongs to $\{a+1,...,i-1\}$ and to the first column of $T'$.
As $\omega_{T'}(j')=j>i$, the number
$s_{i-1/a}(T')$ is strictly lower that the number of elements $l\in\{a+1,...,i-1\}$ in the first column
of $T'$.
Moreover, by minimality of $i$,
every $l\in\{a+1,...,i-1\}$ is in the same column in $T$ and $T'$.
Thus
$s_{i-1/a}(T')<s_{i-1/a}^T$.
Therefore we get 
$s_{b/a}(T')<s_{b/i-1}^T+s_{{i-1}/a}^T\leq s_{b/a}^T$.
This completes the proof.
\end{proof}

\section{Characterization of singular components}

\subsection{Statement of the result}
\ 
\medskip

The diagram $Y=Y(u)$ is always supposed to have two columns of length $r\geq s$.
Let $\overline{T}$ be the tableau introduced in section \ref{section_Tbarre}.
We define ${\mathcal X}(Y)$ as the set of row-standard tableaux which are obtained from
$\overline{T}$ by switching two entries $i,j\in\{1,...,n\}$ such that $i\leq r$
and $i<j$.
\quad
For example, if $r=4$ and $s=2$, then the elements of ${\mathcal X}(Y)$
are the tableaux:
\[
\mbox{\scriptsize$\young(25,16,3,4)\quad\!
\young(35,26,1,4)\quad\!
\young(45,26,3,1)\quad\!
\young(15,36,2,4)\quad\!
\young(15,46,3,2)\quad\!
\young(15,26,4,3)\quad\!
\young(12,56,3,4)\quad\!
\young(13,26,5,4)\quad\!
\young(14,26,3,5)\quad\!
\young(15,23,6,4)\quad\!
\young(15,24,3,6)$}
\]
Observe that
${\mathcal X}(Y)$ is exactly the set of tableaux which are obtained from $\overline{T}$
by switching two entries $i,j$ such that
$i\leq r$ and $i<j< i+r$.

\medskip

Our main result is the following

\begin{theorem}
\label{theorem_principal}
Suppose $Y=Y(u)$ has two columns.
Let $r$ be the length of the first column of $Y$.
Let $T\in{\mathcal T}(Y)$ be standard.
The component ${\mathcal K}^T\subset {\mathcal B}_u$ is singular
if and only if we have 
\[\#\{T'\in {\mathcal X}(Y):F_{T'}\in {\mathcal K}^T\}> r(r-1)/2.\]
\end{theorem}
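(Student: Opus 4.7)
The plan is to combine the smoothness criterion of Proposition~\ref{proposition_singularite_FTbarre} with a weight analysis of the tangent space $T_{F_{\overline{T}}}\mathcal{K}^T$ under the torus $H'\subset GL(V)$ constructed in the proof of Lemma~\ref{lemme_singularite_pointsfixes}, and then to decide which weight directions survive in $\mathcal{K}^T$ using Theorem~\ref{theorem_pointsfixes}. By Proposition~\ref{proposition_singularite_FTbarre}(b) and the dimension formula of Theorem~\ref{theoreme_dimension_Bu}, the component $\mathcal{K}^T$ is singular if and only if $\dim T_{F_{\overline{T}}}\mathcal{K}^T>r(r-1)/2+s(s-1)/2$. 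Since $F_{\overline{T}}$ is an isolated $H'$-fixed point and $H'$ preserves $\mathcal{K}^T$, the tangent space splits into weight spaces of nonzero weight; the weights of $H'$ on the basis $(e_x)$ being pairwise distinct, each weight space is at most one-dimensional, and a Bialynicki-Birula/Carrell-Peterson-style argument identifies $\dim T_{F_{\overline{T}}}\mathcal{K}^T$ with the number of one-dimensional $H'$-orbits in $\mathcal{K}^T$ whose closure contains $F_{\overline{T}}$.

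Next I would classify these one-dimensional $H'$-orbits. The closure of such an orbit meets the $H'$-fixed point set in a second flag $F_{T'}$, and a one-parameter-subgroup construction, modeled on the automorphisms $w_t$ of Proposition~\ref{proposition_singularite_FTbarre} and the $\phi_t$ of Theorem~\ref{theorem_pointsfixes}, shows that $T'$ arises from $\overline{T}$ by swapping a single pair of entries $i<j$, the swap producing a row-standard tableau. A direct check on the two affected rows splits these admissible transpositions into three families: (i) $1\leq i<j\leq r$; (ii) $1\leq i\leq r<j<i+r$; (iii) $r<i<j<i+r$. Families (i) and (ii) together are exactly $\mathcal{X}(Y)$, while family (iii) contains precisely $s(s-1)/2$ pairs. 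An orbit lies in $\mathcal{K}^T$ if and only if its second endpoint $F_{T'}$ does.

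The key combinatorial step is to show, via Theorem~\ref{theorem_pointsfixes}, that every transposition in family (i) and every transposition in family (iii) yields $F_{T'}\in\mathcal{K}^T$ for every standard $T$. For family (iii) the second-column content is permuted only among entries used unchanged in the construction of $T^*$, and a direct comparison of the formulas for $s_{b/a}(T')$ and $s_{b/a}^T$ gives the required inequalities. For family (i) the first column of $T'$ is still $\{1,\ldots,r\}$ as a set, so the inequalities follow by splitting indices according to the relative positions of $\{i,j\}$ and $\{a,b\}$. These two families therefore contribute $r(r-1)/2+s(s-1)/2=\dim\mathcal{K}^T$ tangent directions unconditionally.

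Setting $N_T=\#\{(i,j)\in\text{family (ii)}:F_{T'}\in\mathcal{K}^T\}$, the previous steps give $\dim T_{F_{\overline{T}}}\mathcal{K}^T = r(r-1)/2+s(s-1)/2 + N_T$ and $\#\{T'\in\mathcal{X}(Y):F_{T'}\in\mathcal{K}^T\} = r(r-1)/2 + N_T$. Hence $\mathcal{K}^T$ is singular iff $N_T>0$, iff the cardinality in the theorem exceeds $r(r-1)/2$. I expect the main obstacle to be the Bialynicki-Birula identification of $\dim T_{F_{\overline{T}}}\mathcal{K}^T$ with the count of one-dimensional $H'$-invariant curves in the potentially singular component $\mathcal{K}^T$; once this is in place the combinatorial verifications for families (i) and (iii) become routine consequences of Theorem~\ref{theorem_pointsfixes}.
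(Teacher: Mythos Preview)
Your torus approach breaks down at the very first step: the weights of $H'$ on $T_{F_{\overline{T}}}\mathcal{B}$ are \emph{not} pairwise distinct. The defining relation $h_tu=t^{-1}uh_t$ forces $\epsilon_k=\epsilon_{r+k}+1$ for every $k\leq s$, so the tangent coordinates $\varepsilon_{i,j}$ and $\varepsilon_{i+r,j+r}$ carry the \emph{same} $H'$-weight whenever $1\leq i<j\leq s$. These two-dimensional weight spaces contain a whole $\mathbb{P}^1$ of $H'$-invariant lines through $F_{\overline{T}}$, so there are infinitely many $H'$-invariant curves and no finite curve count to compare with $\dim T_{F_{\overline{T}}}\mathcal{K}^T$. (Enlarging $H'$ to the maximal torus of $Z(u)$ does not help: the commutation $gu=ug$ forces the same weight coincidences.) No Bialynicki--Birula or Carrell--Peterson statement applies in this non-generic situation.

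The combinatorial assertions about families (i) and (iii) are also false. Take $r=s=2$ and $T$ the column-filling tableau with rows $(1,2),(3,4)$; then $T^*=T$ and $s_{3/1}^T=0$. Swapping $(1,2)$ in $\overline{T}$ (family~(i)) gives $T'$ with rows $(2,3),(1,4)$ and $s_{3/1}(T')=1>0$, so $F_{T'}\notin\mathcal{K}^T$; swapping $(3,4)$ (family~(iii)) gives $T'$ with rows $(1,4),(2,3)$ and again $s_{3/1}(T')=1$, so $F_{T'}\notin\mathcal{K}^T$. Thus neither family lies in $\mathcal{K}^T$ unconditionally, and the decomposition $\dim T_{F_{\overline{T}}}\mathcal{K}^T=\dim\mathcal{K}^T+N_T$ cannot be obtained this way.

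The paper instead computes $\mathcal{D}=T_{F_{\overline{T}}}\mathcal{K}^T$ directly in the affine chart $\Omega$ with coordinates $\phi_{i,j}$, producing matched lower bounds for $\dim\mathcal{D}$ and $\dim\mathcal{D}^\perp$. The $s(s-1)/2$ contribution does not come from family~(iii) fixed points but from the \emph{diagonal} vectors $\varepsilon_{i,j}+\varepsilon_{i-r,j-r}\in\mathcal{D}$ (one per pair $r<i<j\leq n$), obtained from the $Z(u)$-orbit of $F_{\overline{T}}$; these sit inside the two-dimensional weight spaces and are precisely what your one-curve-per-weight picture misses. The remaining directions $\varepsilon_{i,j}$ with $i\leq r$, $j<i+r$ lie in $\mathcal{D}$ exactly when the corresponding $F_{T'}\in\mathcal{K}^T$, while in the complementary case the rank inequalities of Proposition~\ref{proposition_inegalites} yield an explicit linear relation in $\mathcal{D}^\perp$ (namely $\phi_{i,j}-\phi_{i+r,j+r}$ when $j\leq s$, and $\phi_{i,j}$ otherwise). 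Summing gives $\dim\mathcal{D}=\#\{T'\in\mathcal{X}(Y):F_{T'}\in\mathcal{K}^T\}+s(s-1)/2$, from which the theorem follows.
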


\begin{remark}
It will follow from Lemma \ref{lemme_dim_D}:
if the component ${\mathcal K}^T\subset {\mathcal B}_u$ is nonsingular,
then we have actually the equality $\#\{T'\in {\mathcal X}(Y):F_{T'}\in {\mathcal K}^T\}=r(r-1)/2$.
\end{remark}

\begin{example}
Let $T$ be the tableau
\[T=\young(13,25,4,6)
\]
After computation
using Theorem \ref{theorem_pointsfixes},
we obtain $\#\{T'\in {\mathcal X}(Y):F_{T'}\in {\mathcal K}^T\}=10>r(r-1)/2=6$.
Therefore, the component associated to $T$ is singular.
This singular component had already been pointed out by Vargas \cite{Vargas}.
Here are other examples of standard tableaux whose corresponding components
are singular:
\[
\young(13,25,47,6)\quad
\young(12,34,56,7)\quad
\young(13,25,47,68)\quad
\young(12,34,56,78)\quad
\young(14,26,3,5,7)\quad
\young(13,25,4,6,7)\quad\mbox{etc.}
\]
\end{example}

\subsection{Proof of Theorem \ref{theorem_principal}}
\ 
\medskip

By Theorem \ref{theoreme_dimension_Bu} we have
\[\mathrm{dim}\,{\mathcal K}^T=\mathrm{dim}\,{\mathcal B}_u=\frac{r(r-1)}{2}+\frac{s(s-1)}{2}\,.\]
By Proposition \ref{proposition_singularite_FTbarre},
the flag $F_{\overline{T}}$ belongs to the component ${\mathcal K}^T$ and,
to study the singularity of ${\mathcal K}^T$,
it is sufficient
to study the singularity of the element $F_{\overline{T}}$ in ${\mathcal K}^T$.
To do this, we compute the tangent space of ${\mathcal K}^T$
at $F_{\overline{T}}$.
Let us denote it by ${\mathcal D}$.
It is sufficient to establish the following

\begin{lemma}
\label{lemme_dim_D}
We have
\[\mathrm{dim}\,{\mathcal D}=\#\{T'\in {\mathcal X}(Y):F_{T'}\in {\mathcal K}^T\}+\frac{s(s-1)}{2}\,.\]
\end{lemma}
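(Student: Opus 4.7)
My plan is to decompose $\mathcal D = T_{F_{\bar T}}\mathcal K^T$ under the torus $H'$ from the proof of Lemma~\ref{lemme_singularite_pointsfixes}, which fixes $F_{\bar T}$ and preserves $\mathcal K^T$; so $\mathcal D$ is an $H'$-submodule of $T_{F_{\bar T}}\mathcal B_u$, hence a direct sum of weight subspaces. Representing tangent vectors at $F_{\bar T}$ by strictly lower-triangular endomorphisms $\phi\in\mathfrak n^-$ in the basis $(e_i)$, the condition $[u,\phi]\in\mathfrak b_{F_{\bar T}}$ leaves the entries $\phi_{j,i}$ with $i<j$ free if and only if $i>s$ or $j-i\leq r$; each yields an $H'$-weight vector $\delta_{ji}$ of weight $\epsilon_j-\epsilon_i$. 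With the normalization $\epsilon_{k+r}=\epsilon_k-1$, the coincidences of these weights are: for each $1\leq k<l\leq s$, the weight $\epsilon_l-\epsilon_k$ has multiplicity two (with basis $\delta_{lk}$, a column-$1$ swap, and $\delta_{l+r,k+r}$, a column-$2$ swap); the weight $-1$ has multiplicity $s$ (basis $\delta_{i+r,i}$, $1\leq i\leq s$); all other weights have multiplicity one, and under the bijection $(i,j)\mapsto$swap of $(i,j)$ in $\bar T$ these $1$-dimensional weight spaces, together with the $\delta_{lk}$ from the $2$-dimensional ones, are exactly indexed by $\mathcal X(Y)$.

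\medskip

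Next, $Z(u)$ is connected and preserves each component $\mathcal K^T$, so the image of the infinitesimal action $\mathfrak z(u)\to T_{F_{\bar T}}\mathcal B_u$ sits inside $\mathcal D$. A direct computation of $\mathfrak z(u)$ for a square-zero nilpotent shows that this image is spanned by the $\binom{s}{2}$ vectors $\delta_{lk}+\delta_{l+r,k+r}$ for $1\leq k<l\leq s$ (coming from $E_{lk}+E_{l+r,k+r}\in\mathfrak z(u)$) together with the $\binom{r-s}{2}$ vectors $\delta_{lk}$ for $s<k<l\leq r$ (coming from $E_{lk}\in\mathfrak z(u)$). The swaps $(k,l)$ of the second kind lie in $\mathcal X(Y)$, and each corresponding $F_{T'}$ is the limit at infinity of the curve $(I+tE_{lk})F_{\bar T}\subset\overline{Z(u).F_{\bar T}}\subset\mathcal K^T$, so always belongs to $\mathcal X(Y)\cap\{F_{T'}\in\mathcal K^T\}$. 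Thus $\mathcal D$ automatically contains a subspace of dimension $\binom{s}{2}+\binom{r-s}{2}$ which intersects each $2$-dimensional weight space in a line and accounts for the $\binom{r-s}{2}$ always-in tableaux of $\mathcal X(Y)$.

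\medskip

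It remains to prove that (a) each remaining $1$-dimensional weight space indexed by $T'\in\mathcal X(Y)$ belongs to $\mathcal D$ iff $F_{T'}\in\mathcal K^T$; (b) each $2$-dimensional weight space is fully contained in $\mathcal D$ iff the corresponding column-$1$ swap $T'$ is in $\mathcal K^T$; (c) $\mathcal D$ meets the weight-$(-1)$ subspace trivially. For the positive side of (a) and (b), given $F_{T'}\in\mathcal K^T$ I would construct a $1$-parameter family in $\mathcal K^T$ through $F_{\bar T}$ whose tangent projects nontrivially onto $\delta_{ji}$, adapting the $1$-parameter subgroup techniques used in the proofs of Theorem~\ref{theorem_pointsfixes} and Proposition~\ref{proposition_singularite_FTbarre}. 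For the negative side and for (c), I would linearize the inequalities of Proposition~\ref{proposition_inegalites} at $F_{\bar T}$: the pairs $(i,j)=(i,i+r)$ for $1\leq i\leq s$ yield $s$ independent linearized constraints annihilating the entire weight-$(-1)$ subspace, and for each $T'\in\mathcal X(Y)$ with $F_{T'}\notin\mathcal K^T$ a suitably chosen pair $(i,j)$ (identified using Theorem~\ref{theorem_pointsfixes}) produces a further linearized constraint that removes one dimension from the corresponding weight space. Summing all contributions yields $\dim\mathcal D=\binom{s}{2}+\#\{T'\in\mathcal X(Y):F_{T'}\in\mathcal K^T\}$. The main obstacle is this upper bound: the chosen pairs from Proposition~\ref{proposition_inegalites} must be shown to give linearized constraints that are collectively independent and cut $T_{F_{\bar T}}\mathcal B_u$ down to exactly the claimed dimension.
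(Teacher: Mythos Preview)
Your strategy is essentially the paper's: produce explicit tangent vectors in $\mathcal D$ from $Z(u)$-curves and from curves limiting to $F_{T'}$, and produce linear constraints on $\mathcal D$ by linearizing the rank conditions of Proposition~\ref{proposition_inegalites}. The torus--weight decomposition you add is a pleasant organizing device, but it does not change the substance; the paper's vectors $\varepsilon_{i,j}+\varepsilon_{i-r,j-r}$ and $\varepsilon_{i,j}$ in Lemma~\ref{lemme_D_Dorth}(a),(c) correspond exactly to your $\delta_{lk}+\delta_{l+r,k+r}$ and $\delta_{ji}$, and the paper's constraints in (b),(d) are precisely your linearized rank conditions.

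The gap you yourself flag --- showing that the linearized constraints are independent and suffice --- is the whole point of the paper's argument, and you should see how it is closed there. The paper does not try to prove independence of constraints abstractly. Instead, for each pair $(i,j)$ not accounted for by a tangent vector, it writes down the vanishing of a specific minor of size $c+1$ (coming from the condition $\dim(V_a+u(V_b))\le c$) and expands it to obtain an equation of the form $\phi_{i,j}=P$ or $\phi_{i,j}-\phi_{i+r,j+r}=P$ with $P$ of order $\ge 2$; thus each constraint has a \emph{distinct} linear leading term in the coordinates $\phi_{i,j}$, and independence is manifest. The proof then finishes with a squeeze: the tangent vectors in (a),(c) give $\dim\mathcal D\ge \#\{T'\in\mathcal X(Y):F_{T'}\in\mathcal K^T\}+\binom{s}{2}$, the linear forms in (b),(d) give $\dim\mathcal D^{\perp}\ge \binom{n}{2}-\#\{T'\in\mathcal X(Y):F_{T'}\in\mathcal K^T\}-\binom{s}{2}$, and since the two lower bounds sum to $\dim\Omega$ they are both equalities. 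Your outline would become a complete proof once you carry out these minor computations (your torus grading in fact helps, since constraints of different $H'$-weight are automatically independent, so only the computation within each weight space remains).
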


Let us prove the lemma.
Recall that we have fixed a Jordan basis parameterized by the
boxes of $Y$ (see section \ref{section_base_Jordan}).
As in the beginning of section \ref{section_pointsfixes},
we denote by $e_1,...,e_r$ the vectors of the basis associated to the boxes
of the first column of $Y$ and $e_{r+1},...,e_n$ the vectors
associated to the boxes of the second column, so that we have $u(e_i)=0$ for $i\leq r$
and $u(e_i)=e_{i-r}$ for $i\geq r+1$.
Let ${\mathcal B}$ be the variety of complete flags $F=(V_0\subset ...\subset V_n)$
and let $\Omega\subset {\mathcal B}$ be the open subvariety formed by the flags $F$
such that $V_i\not\subset V_{i-1}+\langle e_{i+1},...,e_n\rangle$ for any $i$.
For any $F\in\Omega$
and any pair $(i,j)$ such that $1\leq i<j\leq n$,
there are unique scalars $\phi_{i,j}(F)$ such that the vectors
\[f_i(F)=e_i+\sum_{j=i+1}^n\phi_{i,j}(F).e_j\qquad \mbox{for }i\in\{1,...,n\}\]
form a basis adapted to $F$
(i.e. $V_i=\langle f_1(F),...,f_i(F)\rangle$ for any $i$).
The maps $F\mapsto \phi_{i,j}(F)$ are algebraic
and they identify $\Omega$ to a vector space, whose
dual space is $\Omega^*=\bigoplus_{(i<j)}\mathbb{C}.\phi_{i,j}$
where the sum is taken over the pairs $(i,j)$ such that $1\leq i<j\leq n$.
Let $(\varepsilon_{i,j})$ be the dual basis of $\Omega$. 

The tangent space ${\mathcal D}$ is a vector subspace of $\Omega$.
Let ${\mathcal D}^\perp=\{\phi\in \Omega^*:\phi(\varepsilon)=0\ \forall \varepsilon\in{\mathcal D}\}$. 
To prove Lemma \ref{lemme_dim_D}, we use the following

\begin{lemma}
\label{lemme_D_Dorth}
Let $i,j$ be such that $1\leq i<j\leq n$. \\
(a) Assume $i> r$. Then $\varepsilon_{i,j}+\varepsilon_{i-r,j-r}\in{\mathcal D}$. \\
(b) Assume $j\geq i+r$. Then $\phi_{i,j}\in {\mathcal D}^\perp$.

\smallskip
\noindent
Assume now that $i\leq r$ and $j< i+r$. Let $T'$ be the tableau obtained 
from $\overline{T}$
by switching $i$ and $j$. The tableau $T'$ is row-standard and belongs to ${\mathcal X}(Y)$. \\
(c) If $F_{T'}\in {\mathcal K}^T$, then we have $\varepsilon_{i,j}\in{\mathcal D}$. \\
(d) Suppose that $F_{T'}\notin {\mathcal K}^T$. Then we have
$\phi_{i,j}-\phi_{i+r,j+r}\in{\mathcal D}^\perp$ if $j\leq s$, and $\phi_{i,j}\in{\mathcal D}^\perp$ if $j> s$.
\end{lemma}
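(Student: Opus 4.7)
The plan is to prove the four parts of the lemma by two complementary methods. Parts (a) and (c) exhibit explicit curves in $\mathcal{K}^T$ through $F_{\overline{T}}$ with the prescribed tangent vectors, built as orbits of carefully chosen one-parameter subgroups of $Z(u)\subset GL(V)$. Parts (b) and (d) instead produce covectors in $\mathcal{D}^\perp$ by linearizing closed conditions satisfied by $\mathcal{K}^T$ at $F_{\overline{T}}$: for (b), the global $u$-stability of flags in $\mathcal{B}_u$; for (d), the dimension inequalities of Proposition~\ref{proposition_inegalites}.

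For (a), I would define $\psi_t\in GL(V)$ by $\psi_t(e_i)=e_i+te_j$, $\psi_t(e_{i-r})=e_{i-r}+te_{j-r}$, $\psi_t(e_k)=e_k$ otherwise; the hypothesis $i>r$ forces $u(e_{i-r})=u(e_{j-r})=0$ and $i+r,j+r>n$, so $\psi_t$ commutes with $u$. Since $Z(u)$ preserves each component of $\mathcal{B}_u$ (as in the proof of Proposition~\ref{proposition_singularite_FTbarre}), $\psi_tF_{\overline{T}}$ is a curve in $\mathcal{K}^T$, and a direct $\Omega$-coordinate computation gives $\phi_{i,j}(\psi_tF_{\overline{T}})=\phi_{i-r,j-r}(\psi_tF_{\overline{T}})=t$ with all other coordinates vanishing, yielding tangent $\varepsilon_{i,j}+\varepsilon_{i-r,j-r}$. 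For (c), I would define $\beta_t(e_j)=e_j+te_i$ together with $\beta_t(e_{j+r})=e_{j+r}+te_{i+r}$ when $j+r\leq n$, identity otherwise; $i\leq r$ ensures $\beta_t\in Z(u)$, so $\beta_tF_{T'}\in\mathcal{K}^T$. Re-parametrizing $s=t^{-1}$, the curve $\beta_{s^{-1}}F_{T'}$ tends to $F_{\overline{T}}$ as $s\to 0$ with $\phi_{i,j}=s$ and all other coordinates zero, producing the tangent $\varepsilon_{i,j}$. For (b), I would note that for any $F\in\mathcal{B}_u$, $u|_{V_k}$ is nilpotent and upper-triangular in the basis $f_1,\ldots,f_k$ adapted to $F$, hence has zero diagonal, so $u(f_k)\in V_{k-1}$; projecting this to $V/V_{k-1}$ at $F_{\overline{T}}$ and linearizing forces $\phi_{k,m}=0$ to first order for every $m\in[k+r,n]$, and setting $k=i$ proves (b).

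For (d), Theorem~\ref{theorem_pointsfixes} provides from $F_{T'}\notin\mathcal{K}^T$ an index pair $(a,b)$ with $s_{b/a}(T')>s_{b/a}^T$. A short comparison of $s_{b/a}(T')$ with $s_{b/a}(\overline{T})$, treating the cases $j\leq r$ and $j>r$ separately, allows me to refine the choice so that additionally $i\leq b-r$, $a<j$, and $s_{b/a}^T=s_{b/a}(\overline{T})$, which makes Proposition~\ref{proposition_inegalites} tight at $F_{\overline{T}}$. By semicontinuity the function $F\mapsto\dim(V_a+u(V_b))$ is then locally constant on $\mathcal{K}^T$ near $F_{\overline{T}}$, so the rank of $M(\phi)=[f_1|\cdots|f_a|u(f_1)|\cdots|u(f_b)]$ stays at its value at $F_{\overline{T}}$. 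Linearizing this determinantal condition at this rank-$d$ point, using the tangent-space formula $\delta M\cdot\xi\in\mathrm{im}\,M(F_{\overline{T}})$ for every $\xi\in\ker M(F_{\overline{T}})$, and applying it to the kernel vectors $\xi_l=e^*_{a+r+l}-e^*_l$ for $l\in[1,b-r]$, yields the relations $\phi_{l,m}-\phi_{r+l,m+r}=0$ for $m\in(a,s]$ and $\phi_{l,m}=0$ for $m\in(s,n]$; specializing $(l,m)=(i,j)$ gives (d), the two sub-cases $j\leq s$ and $j>s$ matching which of the two relation types applies. The main obstacle is this last step: the combinatorial refinement of $(a,b)$ and the sign-careful linearization of the rank constraint together form the technical core, and the negative sign in $\phi_{i,j}-\phi_{i+r,j+r}$ can be cross-checked against (a) via $(\phi_{i,j}-\phi_{i+r,j+r})(\varepsilon_{i+r,j+r}+\varepsilon_{i,j})=0$.
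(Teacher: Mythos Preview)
Your treatments of (a) and (c) are exactly the paper's: the same one-parameter subgroups of $Z(u)$, the same curves. For (b) the idea is right but a touch too quick when $i>r$: linearizing $u(f_i)\in V_{i-1}$ at $F_{\overline{T}}$ only yields $\phi_{i,m}\equiv\phi_{i-r,m-r}$ on $\mathcal D$ (the zeroth-order relation $u(e_i)=e_{i-r}$ contributes a nonzero coefficient), and you then need the already-established case $i-r\le r$ to conclude $\phi_{i-r,m-r}\equiv 0$. This is minor.

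Part (d) has a genuine gap. Your ``refinement'' to $i\le b-r$ cannot always be achieved, and without it the kernel vector $\xi_i$ does not exist. Take $r=3$, $s=1$, $T=\overline{T}$, $i=2$, $j=4$; then $T'$ has rows $(1,2),(4),(3)$ and the only pairs with $s_{b/a}(T')>s_{b/a}^T$ are $(a,b)\in\{(0,2),(0,3)\}$, for which $b-r\le 0<2=i$. More generally, whenever $j>r$ and $i>s$ the only way to make $s_{b/a}(T')>s_{b/a}(\overline{T})$ is through the row $(j-r,i)$ of $T'$, which forces $a<j-r$ and $i\le b<j$; then $b-r<j-r<i$ and also $a<i$, so neither $f_i$ nor your $\xi_i$ is available. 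This is precisely the paper's subcase $a<j-r<i\le b<j$, where the paper instead brings in the column $u(f_i)$ and extracts $\phi_{i,j}$ as its $e_{j-r}$-coefficient. In your framework the fix is immediate: the kernel of $M(F_{\overline{T}})$ also contains the unit vectors supported on the \emph{zero} columns $u(f_k)$ for $1\le k\le\min(r,b)$. Applying your tangent criterion to $\xi=e^*_{a+i}$ gives $\delta u(f_i)\in\langle e_1,\dots,e_c\rangle$ with $c=\max(a,b-r)$, and since $j-r>c$ in this subcase you obtain $\phi_{i,j}\equiv 0$ on $\mathcal D$. So your method is sound, but as written the refinement claim is false and the list of kernel vectors is incomplete, leaving (d) unproved in the subcase above.
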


\begin{proof}[Proof of Lemma \ref{lemme_dim_D}.]
We have $\mathrm{dim}\,{\mathcal D}+\mathrm{dim}\,{\mathcal D}^\perp=\mathrm{dim}\,\Omega=n(n-1)/2$.
Claims (a) and (c) of Lemma \ref{lemme_D_Dorth} imply
$\mathrm{dim}\,{\mathcal D}\geq \#\{T'\in {\mathcal X}(Y):F_{T'}\in {\mathcal K}^T\}+s(s-1)/2$.
Claims (b) and (d) of Lemma \ref{lemme_D_Dorth} imply
$\mathrm{dim}\,{\mathcal D}^\perp\geq n(n-1)/2-\#\{T'\in {\mathcal X}(Y):F_{T'}\in {\mathcal K}^T\}-s(s-1)/2$.
Thus these two inequalities are in fact two equalities and
Lemma \ref{lemme_dim_D} follows. \end{proof}

It remains to prove Lemma \ref{lemme_D_Dorth}.
Recall from the proof of Proposition \ref{proposition_singularite_FTbarre} that the group $Z(u)$ acts on ${\mathcal B}_u$
and leaves the component ${\mathcal K}^T$ (and any other component) invariant.

\begin{proof}[Proof of Lemma \ref{lemme_D_Dorth}.(a).]
For $t\in\mathbb{C}$,
let $w_t\in GL(V)$ be the automorphism defined by
$w_t:e_i\mapsto e_i+te_j$ and $w_t:e_{i-r}\mapsto e_{i-r}+te_{j-r}$,
and $w_t:e_k\mapsto e_k$ if $k\notin\{i,i-r\}$.
We have $w_t\in Z(u)$,
hence $w_t$ leaves ${\mathcal K}^T$ invariant.
For any $t\in\mathbb{C}$ we have thus $w_t.F_{\overline{T}}\in {\mathcal K}^T$.
Observe that this flag belongs to $\Omega$ and can be written $t.(\varepsilon_{i,j}+\varepsilon_{i-r,j-r})$.
Thus $\varepsilon_{i,j}+\varepsilon_{i-r,j-r}\in {\mathcal D}$.
\end{proof}

\begin{proof}[Proof of Lemma \ref{lemme_D_Dorth}.(b).]
Let $F\in \Omega\cap {\mathcal K}^T$.
Set for simplicity $f_k=f_k(F)$ and $\phi_{k,l}=\phi_{k,l}(F)$.
We have $u(f_i)\in\langle f_1,...,f_{i-1}\rangle$.
Thus the family $(f_1,...,f_{i-1},$ $u(f_i))$ has rank $i-1$.
Consider the matrix of this family in the basis $(e_1,...,e_n)$.
We express the minor of this matrix relative to the subbasis $(e_1,...,e_{i-1},e_{j-r})$
\quad (the symbol ``$*$'' equals $\phi_{k,l}$ for some pair $k<l$):
\[
\begin{array}{|cccc|}
1 & \ & (0) & * \\
\ & \ddots & \ & \vdots \\
(*) & \ & 1 & * \\
* & \cdots & * & \phi_{i,j}
\end{array}=\phi_{i,j}-P
\]
where $P$ is a polynomial on the $\phi_{k,l}$
with only terms of degree $\geq 2$.
By hypothesis this minor is equal to zero, thus we get:
$\phi_{i,j}=P$, hence the equation $\phi_{i,j}=0$ is satisfied
in the tangent space ${\mathcal D}$.
\end{proof}

\begin{proof}[Proof of Lemma \ref{lemme_D_Dorth}.(c).]
For $t\in\mathbb{C}^*$, let $w_t\in GL(V)$ be the automorphism defined by
$w_t: e_j\mapsto e_j+t^{-1}e_i$
and $w_t:e_k\mapsto e_k$ for $k\notin\{j,j+r\}$
and in addition $w_t:e_{j+r}\mapsto e_{j+r}+t^{-1}e_{i+r}$ in case $j\leq s$.
Since $w_t$ belongs to the group $Z(u)$,
it leaves ${\mathcal K}^T$ invariant.
Hence for every $t\in\mathbb{C}^*$ we have $w_t.F_{T'}\in {\mathcal K}^T$.
Observe that this flag belongs to $\Omega$ and can be written $t.\varepsilon_{i,j}$.
Thus $\varepsilon_{i,j}\in {\mathcal D}$.
\end{proof}

\begin{proof}[Proof of Lemma \ref{lemme_D_Dorth}.(d).]
We suppose that the flag $F_{T'}$ does not belong
to the component ${\mathcal K}^T$. By Theorem \ref{theorem_pointsfixes},
there are $0\leq a<b\leq n$ such that $s_{b/a}(T')>s_{b/a}^T$.
Observe that
$s_{b/a}(T')\leq s_{b/a}(\overline{T})+1$,
and that $s_{b/a}(\overline{T})\leq s_{b/a}^T$
since the flag $F_{\overline{T}}$ belongs to the component.
Thus
$s_{b/a}(T')= s_{b/a}(\overline{T})+1$
and
$s_{b/a}^T=s_{b/a}(\overline{T})=\mathrm{Max}(0,b-a-r)$.
By Proposition \ref{proposition_inegalites}, we get that
each flag $F=(V_0,...,V_n)\in {\mathcal K}^T$ satisfies the inequality
\[\mathrm{dim}(V_a+u(V_b))\leq \mathrm{Max}(a,b-r).\]
Let $F\in \Omega$
and set for simplicity $f_k=f_k(F)$ and $\phi_{k,l}=\phi_{k,l}(F)$.
We distinguish two cases.

\smallskip
\noindent
(A) First case: $j\leq r$. \\
Observe that, for having
$s_{b/a}(T')>s_{b/a}(\overline{T})$,
it is necessary to have:
\[i\leq a<j\leq i+r\leq b<j+r.\]
Set $i'=i+r$ and $j'=j+r$.
In the case $a\geq b-r$ we consider the matrix of the family
$(f_1,...,f_{i-1},f_{i+1},...,f_a,f_i,u(f_{i'}))$
in the basis $(e_1,...,e_n)$.
In the case $a<b-r$ we consider the matrix of the family
$(u(f_{r+1}),...,u(f_{i'-1}),$ $u(f_{i'+1}),...,u(f_b),f_i,u(f_{i'}))$
in the basis $(e_1,...,e_n)$.
Set $c=\mathrm{Max}(a,b-r)$.
In both cases we express the minor of the matrix relatively to the subbasis $(e_1,...,e_{i-1},e_{i+1},...,e_{c},e_i,e_{j})$
\quad (the symbol ``$*$'' equals $\phi_{k,l}$ for some pair $k<l$)
\quad (if $j'>n$, then set $\phi_{i',j'}=0$ by convention):
\[
\begin{array}{|ccccc|}
1 & \ & (0) & * & * \\
\ & \ddots & \ & \vdots & \vdots \\
(*) & \ & 1 & * & * \\
* & \cdots & * & 1 & 1 \\
* & \cdots & * & \phi_{i,j} & \phi_{i',j'}
\end{array}=\phi_{i,j}-\phi_{i',j'}-P
\]
where $P$ is a polynomial on the $\phi_{k,l}$
with only terms of degree $\geq 2$.
This minor is equal to zero, hence we get
$\phi_{i,j}-\phi_{i',j'}=P$. Therefore the relation $\phi_{i,j}=\phi_{i',j'}$ holds
in the tangent space ${\mathcal D}$.

\smallskip
\noindent
(B) Second case: $j\geq r+1$. \\
Observe that, for having
$s_{b/a}(T')>s_{b/a}(\overline{T})$,
it is necessary to have:
\[
a<j-r<i\leq b<j<i+r\quad\mbox{or}\quad j-r<i\leq a<j<i+r\leq b.\]
Set $i'=i+r$ and $j'=j-r$. Set $c=\mathrm{Max}(a,b-r)$. \\
First, suppose that $a<j-r<i\leq b<j<i+r$.
In the case $a\geq b-r$ consider the matrix of the family
$(f_1,...,f_{a},u(f_{i}))$
in the basis $(e_1,...,e_n)$.
In the case $a<b-r$ consider the matrix of the family
$(u(f_{r+1}),...,u(f_{b}),u(f_i))$
in the basis $(e_1,...,e_n)$. \\
Next, suppose that $j-r<i\leq a<j<i+r\leq b$.
In the case $a\geq b-r$ consider the matrix of the family
$(f_1,...,f_{a},f_{i})$
in the basis $(e_1,...,e_n)$.
In the case $a<b-r$ consider the matrix of the family
$(u(f_{r+1}),...,u(f_{b}),f_i)$
in the basis $(e_1,...,e_n)$.

\smallskip
\noindent
We express the minor of the considered matrix, relative to some subbasis.
For $a<j-r<i\leq b<j<i+r$, we choose the subbasis
$(e_1,...,e_{c},e_{j'})$.
For $j-r<i\leq a<j<i+r\leq b$, we choose the subbasis
$(e_1,...,e_{c},e_{j})$.
In both cases, the minor has the following expression
\quad (the symbol ``$*$'' equals $\phi_{k,l}$ for some pair $k<l$):
\[
\begin{array}{|cccc|}
1 & \ & (0) & * \\
\ & \ddots & \ & \vdots \\
(*) & \ & 1 & * \\
* & \cdots & * & \phi_{i,j}
\end{array}=\phi_{i,j}-P
\]
where $P$ is a polynomial on the $\phi_{k,l}$
with only terms of degree $\geq 2$.
This minor is equal to zero, hence we get
$\phi_{i,j}=P$. Therefore the equation $\phi_{i,j}=0$ holds
in the tangent space ${\mathcal D}$.
Our proof is now complete.
\end{proof}

\end{document}